\newcommand{\R}{\mathbb{R}}
\newcommand{\N}{\mathbb{N}}
\newcommand{\Z}{\mathbb{Z}}
\newcommand{\W}{\mathcal{W}}
\def\diam{\qopname\relax o{diam}}
\def\dist{\qopname\relax o{dist}}
\def\supp{\qopname\relax o{supp}}
\def\min{\qopname\relax o{min}}
\numberwithin{equation}{section}
\def\kint_#1{\mathchoice%
          {\mathop{\kern 0.2em\vrule width 0.6em height 0.69678ex depth -0.58065ex
                  \kern -0.8em \intop}\nolimits_{\kern -0.4em#1}}%
          {\mathop{\kern 0.1em\vrule width 0.5em height 0.69678ex depth -0.60387ex
                  \kern -0.6em \intop}\nolimits_{#1}}%
          {\mathop{\kern 0.1em\vrule width 0.5em height 0.69678ex depth -0.60387ex
                  \kern -0.6em \intop}\nolimits_{#1}}%
          {\mathop{\kern 0.1em\vrule width 0.5em height 0.69678ex depth -0.60387ex
                  \kern -0.6em \intop}\nolimits_{#1}}}
\theoremstyle{plain} 
\newtheorem{lemma}[equation]{Lemma} 
\newtheorem{proposition}[equation]{Proposition} 
\newtheorem{theorem}[equation]{Theorem} 
\newtheorem{corollary}[equation]{Corollary}
\theoremstyle{definition}
\newtheorem{definition}[equation]{Definition} 
\theoremstyle{remark}
\newtheorem{remark}[equation]{Remark}
\numberwithin{equation}{section}
\title[Hardy inequalities]{Hardy inequalities in Triebel--Lizorkin spaces} 
\keywords{Triebel--Lizorkin space,
Ahlfors $d$-regular set, Hardy inequality, pointwise multiplier, extension theorem, local polynomial approximation}
\subjclass[2000]{46E35}
\author{Lizaveta Ihnatsyeva}
\author{Antti V. V\"ah\"akangas}
\thanks{L.I. is supported by the Academy of Finland, grant 252293}
\begin{document}

\begin{abstract}
We prove an inequality of Hardy type for functions in Triebel--Lizorkin spaces. The distance involved is being measured to a given Ahlfors $d$-regular set in $\R^n$, with $n-1<d<n$.
As an application of the Hardy inequality, we consider boundedness of
pointwise multiplication 
operators, and extension problems.
\end{abstract}	
	
\maketitle

\setcounter{tocdepth}{1}
\tableofcontents



\section{Introduction}\label{introduction}

In the paper we study inequalities of Hardy type 
\begin{equation}\label{hardy_ineq}
\bigg(\int_{\R^n} \frac{\lvert f(x)\rvert^p}{\dist(x,S)^{sp}}\,dx\bigg)^{1/p}
\lesssim \lVert f\rVert_{F^s_{pq}(\R^n)}
\end{equation}
for functions in Triebel--Lizorkin spaces $F^{s}_{pq}(\R^n)$
with vanishing trace on $S$.
The set $S\subset\R^n$, $n\ge 2$, under consideration is supposed to be an (Ahlfors) $d$-regular set with $n-1<d<n$. We also consider applications  to extension problems.

Often Hardy inequalities are formulated in the form
\begin{equation}\label{hardy_ineqDomain}
\bigg(\int_{\Omega} \frac{\lvert f(x)\rvert^p}{\dist(x,\partial\Omega)^{sp}}\,dx\bigg)^{1/p}
\lesssim \lVert f\rVert_{F^s_{pq}(\Omega)},
\end{equation}
where $\Omega$ is an appropriate domain in $\R^n$.
In case of a $C^\infty$ bounded domain, inequality \eqref{hardy_ineqDomain} is known to hold 
for all
$f\in F^{s}_{pq}(\Omega)$ with  $1\le p,q<\infty$ and $0<s<1/p$, see  \cite[p. 58]{T3} and \cite{T99}.
When $s\ge 1/p$, it is natural to require that $f$ can be extended by zero, \cite[Proposition 5.7]{T3}.
We postpone the details on the connection
between \eqref{hardy_ineq} and \eqref{hardy_ineqDomain} to  \S \ref{application}.

Inequality \eqref{hardy_ineqDomain} can be considered as one of the fractional counterparts of 
the classical Hardy inequality (recall that in case of a
$C^\infty$ bounded domain, the Sobolev space $W^{k,p}(\Omega)$ coincides 
with the Triebel--Lizorkin space $F^{k}_{p2}(\Omega)$, $1<p<\infty$ and $k\in\N$). 
While classical Hardy inequalities have been studied under rather general geometric assumptions on the domain (more precisely on its complement, we refer to
\cite{Lewis1988,Wannebo1990,KinnunenMartio}), the fractional analogs were mostly considered on smooth and Lipschitz domains, see \cite{T99,Dyda}.
In more irregular 
geometric settings,
fractional Hardy type inequalities
 have been considered in \cite{E-HSV} and \cite[Proposition 16.5]{T3}. 
Also, Hardy type inequalities for certain Triebel--Lizorkin spaces  
on `E-thick' domains $\Omega$ can be obtained
via so called refined localisation spaces \cite[Theorem 2.18, Proposition 3.10]{T4}.


Let us turn to a discussion of the objects of this paper.
Throughout, we focus on the case of smoothness $s>(n-d)/p$ and $S$ being
a  $d$-set in $\R^n$, $n-1<d<n$.

Our motivation to consider inequality \eqref{hardy_ineq} 
arises from the work  \cite{ihnatsyeva}, where we obtained a 
description of the traces of Besov--Triebel--Lizorkin spaces on  $d$-sets.
This description is given
in terms of a local polynomial approximations \cite{Brudnyi74} and, 
as it turns out, the related tools and porosity based arguments
easily adapt 
to the study of Hardy inequalities, and their consequences. 
Let us remark that we widely use the fact that $d$-sets in $\R^n$ with $d<n$ are porous.

The main result, Theorem \ref{hardy_general}, states that Hardy inequality \eqref{hardy_ineq} is valid if the trace of $f$ on $S$
 is zero pointwise
$\mathcal{H}^d$ almost everywhere.
%
Hardy inequalities are related to the question whether the
characteristic function $\chi_\Omega$ of a domain is a 
pointwise multiplier in $F^{s}_{pq}(\R^n)$.
Our auxiliary result, Proposition \ref{ext_hardy}, states that inequality,
\[
\lVert f\chi_\Omega\rVert_{F^{s}_{pq}(\R^n)} \lesssim \lVert f\rVert_{F^{s}_{pq}(\R^n)} + 
\bigg(\int_{\Omega} \frac{\lvert f(x)\rvert^p}{\dist(x,\partial\Omega)^{sp}}\,dx\bigg)^{1/p}\,,
\]
holds for a domain $\Omega$ in $\R^n$ whose
boundary is a $d$-set with $n-1<d<n$.  
As a corollary of Theorem \ref{hardy_general} and Proposition \ref{ext_hardy},
we obtain that $\chi_\Omega$ is a pointwise 
multiplier in the subspace $\{f\in F^{s}_{pq}(\R^n)\,:\,\mathrm{Tr}_{\partial\Omega}f=0\}$, where $\mathrm{Tr}_{\partial\Omega}f$ denotes 
the trace of $f$ on $\partial\Omega$,
see Corollary \ref{ext_corollary}. 
For further results 
on the pointwise multipliers $\chi_\Omega$ of irregular domains, we refer to
\cite{FaracoRogers,sickel,T03}. 


As an application, we study the  problem of extending a smooth function from its boundary trace. 
More precisely,
for $f\in F^{s}_{pq}(\R^n)$ with $s>(n-d)/p$
and $k=[s]+1$, we define an extension  \begin{equation}
{\rm Ext}_{k,\Omega}f(x):=\begin{cases}
f(x), \quad &\text{if }x\in \Omega;\\
\sum_{Q\in \mathcal{W}_{\partial\Omega}}\varphi_Q(x)\{\mathrm{Pr}_{k-1,a(Q)}\circ \mathrm{Tr}_{{\partial\Omega}}(f)\}(x), \quad &\text{if }x\in\mathbb{R}^n\setminus \Omega.
\end{cases}
\end{equation}
Let us emphasise
that the operator $\mathrm{Pr}_{k-1,a(Q)}$ is a polynomial 
projector on $L^1(a(Q)\cap \partial\Omega)$, where
$a(Q)$ is a cube, which is centered in $\partial\Omega$ and
is close to the given Whitney cube
$Q\in\mathcal{W}_{\partial\Omega}$.
Observe that, if $f$ has zero boundary values in $\partial\Omega$, then 
${\rm Ext}_{k,\Omega}f=f\chi_\Omega$.

In Theorem \ref{main_extension}, we prove boundedness of the operator $\mathrm{Ext}_{k,\Omega}$ on $F^{s}_{pq}(\R^n)$.
The proof is based on restriction and extension theorems 
for $d$-sets \cite{ihnatsyeva}, as well as the
multiplier result mentioned above.
To our knowledge, Theorem \ref{main_extension} is not previously known in its generality.
However, the operator $\mathrm{Ext}_{1,\Omega}$ has been considered in connection with
extension results for first order Sobolev spaces $W^{1,p}(\R^n)$, \cite{H}.
The operators $\mathrm{Ext}_{k+1,\Omega}$ with $k\in \{1,2,\ldots\}$
share common properties with certain extension operators of Calder\'on 
\cite{calderon},
which extend
any $f\in W^{k,p}_0(\Omega)$ by zero outside of a given
Lipschitz domain $\Omega$.
Our extension approach
is also related to gluing Sobolev functions with matching traces, \cite{BMMM}.


\smallskip
The outline of the paper is as follows.
In sections \S \ref{notation}, \S \ref{spaces}, \S \ref{d-sets}, 
and \S \ref{porous_sect} we treat the preliminaries: notation,
definitions of function spaces, $d$-sets, and porosity, respectively.
In \S \ref{hardy_special}, we  prove a special 
 Hardy inequality, namely the one for $F^{s}_{pp}(\R^n)$
with $s\in (0,1)$.
We include
this model result
 to demonstrate the somewhat technical proof of our main theorem 
in a transparent manner.  The main theorem is proven in \S \ref{hardy_general_sect}.
In section \S \ref{application}, we consider applications to pointwise
multipliers and extension problems.

\section{Preliminaries}

\subsection{Notation and auxiliary results}\label{notation}

In this paper, unless otherwise specified, 
we tacitly assume that $n\ge 2$ and
that the distances in $\mathbb{R}^n$ are measured in terms of the supremum norm.
At the same time, by $B(x,r)$ we denote the open Euclidean ball, centered
at $x\in\R^n$ with radius $r>0$.

For a measurable set $E$, with a finite and positive measure, we write
\[
\fint_E f(x)\,dx=\frac{1}{\lvert E \rvert} \int_{E}f(x)\,dx\,.
\]
We write $\chi_E$ for the characteristic function of a set $E$.


By $Q=Q(x_Q,r_Q)$ we denote a closed cube in $\mathbb{R}^n$ centered at $x_Q\in \R^n$ with the
side length $\ell(Q)=2r_Q$ and sides parallel to the coordinate axes.
By $tQ$, $t>0$, we mean
a cube centered 
at $x_Q$ with the
side length $t\ell(Q)$.
We denote by
$\mathcal{D}$ the family of closed dyadic cubes in $\R^n$, and $\mathcal{D}_j$ stands for
the family of those dyadic cubes whose
side length is $2^{-j}$, $j\in \Z$.


The support of a function $f:\R^n\to \mathbb{C}$ is denoted by
$\mathrm{supp}\,f$, and it is the closure of the set  $\{x\,:\,f(x)\not=0\}$ in $\R^n$.

The notation $a\lesssim b$ means that an inequality $a\le cb$ holds for some constant $c>0$
the exact value of which is not important. 
The symbols $c$ and $C$ are used for various positive constants; they may change even on the same line.

Recall also an inequality of Hardy type for sums.

\begin{lemma}\label{LemmaHardyInequality}
Let $0<p<\infty$ and $a_j\geq 0$, $j=0,1,\dots$. Then
\begin{equation}\label{Hardy2}
\sum_{j=0}^\infty 2^{\sigma j}\Big(\sum_{i=j}^\infty a_i\Big)^p\lesssim \sum_{j=0}^\infty 2^{\sigma j}a_j^p\,\quad\text{for}\,\,\sigma>0.
\end{equation}
\end{lemma}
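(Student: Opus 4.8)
The plan is to prove the discrete Hardy inequality \eqref{Hardy2} by a direct summation argument, splitting into the cases $0<p\le 1$ and $1<p<\infty$, the latter via duality or a weighted triangle inequality, and then reducing it to a geometric-series estimate. I will write $b_j=\sum_{i=j}^\infty a_i$ for the tail sums (assumed finite, as otherwise the right-hand side is infinite and there is nothing to prove).

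First, consider the easy range $0<p\le 1$. Here $p$-subadditivity gives $b_j^p=\bigl(\sum_{i=j}^\infty a_i\bigr)^p\le\sum_{i=j}^\infty a_i^p$, so
\[
\sum_{j=0}^\infty 2^{\sigma j}b_j^p\le\sum_{j=0}^\infty 2^{\sigma j}\sum_{i=j}^\infty a_i^p
=\sum_{i=0}^\infty a_i^p\sum_{j=0}^i 2^{\sigma j}
\lesssim\sum_{i=0}^\infty 2^{\sigma i}a_i^p\,,
\]
where in the last step I used $\sum_{j=0}^i 2^{\sigma j}\le 2^{\sigma i}\sum_{m=0}^\infty 2^{-\sigma m}=C_\sigma 2^{\sigma i}$, valid precisely because $\sigma>0$. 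This already yields \eqref{Hardy2} in this range.

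For $1<p<\infty$, the clean approach is a weighted Minkowski/triangle-inequality trick. Write $b_j=\sum_{k=0}^\infty a_{j+k}$ and, for a parameter $\delta\in(0,\sigma/p)$ to be chosen, insert weights:
\[
2^{\sigma j/p}b_j=\sum_{k=0}^\infty 2^{-\delta k}\bigl(2^{\delta k}2^{\sigma j/p}a_{j+k}\bigr)
\le\Bigl(\sum_{k=0}^\infty 2^{-\delta k p'}\Bigr)^{1/p'}\Bigl(\sum_{k=0}^\infty 2^{\delta k p}2^{\sigma j}a_{j+k}^p\Bigr)^{1/p}
\]
by Hölder's inequality, where $p'=p/(p-1)$. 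Raising to the power $p$, summing over $j$, and interchanging the order of summation in $j$ and $k$, one obtains
\[
\sum_{j=0}^\infty 2^{\sigma j}b_j^p\lesssim\sum_{k=0}^\infty 2^{\delta k p}\sum_{j=0}^\infty 2^{\sigma j}a_{j+k}^p
=\sum_{k=0}^\infty 2^{\delta k p}2^{-\sigma k}\sum_{i=k}^\infty 2^{\sigma i}a_i^p
\le\Bigl(\sum_{k=0}^\infty 2^{(\delta p-\sigma)k}\Bigr)\sum_{i=0}^\infty 2^{\sigma i}a_i^p\,.
\]
Choosing $\delta=\sigma/(2p)$, say, makes both geometric series $\sum 2^{-\delta kp'}$ and $\sum 2^{(\delta p-\sigma)k}$ convergent, which finishes the proof.

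The only genuine subtlety — and the step I would be most careful about — is bookkeeping in the interchange of summations and making sure every geometric series that appears has ratio strictly less than $1$; this is exactly where the hypothesis $\sigma>0$ enters, and it is worth flagging that the implied constant in \eqref{Hardy2} depends on $\sigma$ and $p$ but is otherwise harmless. Alternatively, one can unify both cases by noting that \eqref{Hardy2} is a standard weighted Hardy inequality for sequences, for instance a special case of the results in the monograph literature on Besov–Triebel–Lizorkin spaces; but the self-contained argument above is short enough that I would include it directly.
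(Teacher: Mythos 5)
Your argument is correct, but it is not the route the paper takes: for $p>1$ the paper does not prove the inequality at all, it simply invokes Leindler's result \cite{Leindler}, and only the range $0<p\le 1$ is handled in-house via the $p$-subadditivity $(\sum_{i\ge j}a_i)^p\le\sum_{i\ge j}a_i^p$ followed by the same Fubini-plus-geometric-series step you use. Your treatment of $0<p\le 1$ thus coincides with the paper's, while for $1<p<\infty$ you replace the citation with a self-contained computation: writing $b_j=\sum_{k\ge 0}a_{j+k}$, applying H\"older with the geometric weights $2^{-\delta k}$, $0<\delta<\sigma/p$, and interchanging the $j$ and $k$ sums so that both resulting series $\sum_k 2^{-\delta kp'}$ and $\sum_k 2^{(\delta p-\sigma)k}$ converge. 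I checked the exponent bookkeeping ($\sum_j 2^{\sigma j}a_{j+k}^p\le 2^{-\sigma k}\sum_i 2^{\sigma i}a_i^p$, and $\delta=\sigma/(2p)$ keeps both ratios strictly below $1$), and it is sound; since all terms are non-negative, Tonelli justifies the interchange, and your preliminary reduction to finite tails is harmless (indeed automatic, since finiteness of the right-hand side forces $a_j\lesssim 2^{-\sigma j/p}$). What your approach buys is a short, fully self-contained proof with explicit dependence of the constant on $\sigma$ and $p$; what the paper's citation buys is brevity and an appeal to a more general known family of Hardy--Littlewood type inequalities. Either is acceptable here.
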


For $p>1$ this lemma follows from Leindler's result in \cite{Leindler}. 
The case $0<p\le 1$
 follows by applying the inequality
$(\sum_{i=j}^\infty a_i)^p \le \sum_{i=j}^\infty a_i^p$.

\subsection{Function spaces in $\R^n$}\label{spaces}

We recall definition of {\em fractional order Sobolev spaces} in $\R^n$.
For $1\le p<\infty$ and $s\in (0,1)$ we let
$W^{s,p}(\R^n)$ denote the collection of  functions $f$ in
$L^p(\R^n)$ with
\begin{equation}\label{frac_def}
\lVert f \rVert_{W^{s,p}(\R^n)}
:=\lVert f\rVert_{L^p(\R^n)} + \bigg(\int_{\R^n} \int_{\R^n}
\frac{|f(x)-f(y)|^p}{|x-y|^{ps+n}}\,dx\,dy\bigg)^{1/p} < \infty.
\end{equation}
There are several equivalent characterizations for the fractional Sobolev spaces and their natural extensions, Triebel--Lizorkin spaces (see e.g. \cite{AH}, \cite{BesovIlinNikolski}, \cite{T89} and \cite{T2} for the general theory).

 In this
paper, we mostly use definitions based on the local polynomial approximation approach.
Let $f\in L^u_{\mathrm{loc}}(\R^n)$, $1\le u \le \infty$, and $k\in\N_0$.
Following \cite{Brudnyi74}, we define the {\em normalized local best approximation} of $f$ on a cube
$Q$ in $\R^n$ by
\begin{equation} 
\mathcal{E}_k(f,Q)_{L^u(\R^n)}:=\inf_{P\in \mathcal{P}_{k-1}}\bigg(\frac{1}{|Q|}\int_{Q}|f(x)-P(x)|^u\;dx\bigg)^{1/u}.
\end{equation}
Here and below  $\mathcal{P}_{k}$, $k\geq 0$, denotes the space of polynomials on $\mathbb{R}^n$ of degree at most $k$. 
Let $Q_1\subset Q_2$ be two cubes in $\R^n$. Then
\begin{equation}\label{eqMonotonyOfLocalApproxSset}
\mathcal{E}_k(f,Q_1)_{L^u(\R^n)}
\le \bigg(\frac{r_{Q_2}}{r_{Q_1}}\bigg)^{n/u}
\mathcal{E}_k(f,Q_2)_{L^u(\R^n)}.
\end{equation}
This property is referred as  {\em the monotonicity
of local approximation}.



The following definition of Triebel--Lizorkin spaces 
with positive smoothness
can be found in \cite{T89}.
Let $s>0$, $1\le p<\infty$, $1\le q\le\infty$, and $k$ be an integer such that $s<k$. 
For $f\in L^u_{\mathrm{loc}}(\R^n)$, $1\le u\le \min\{p,q\}$, set for 
all $x\in\R^n$,
\[
g(x):=\bigg(\int_0^1\bigg(\frac{\mathcal{E}_k(f,Q(x,t))_{L^u(\R^n)}}{t^{s}}\bigg)^{q}\;\frac{dt}{t}\bigg)^{1/q} ,\qquad \text{if }q<\infty,
\]
and $g(x):=\sup\{t^{-s}\mathcal{E}_k(f,Q(x,t))_{L^u(\R^n)}:\, 0<t\le 1\}$ if $q=\infty$. 
The function $f$ belongs to
Triebel--Lizorkin space $F^s_{pq}(\R^n)$ if $f$ and $g$ are both in $L^p(\R^n)$. The Triebel--Lizorkin norms
\[
\| f\|_{F^s_{pq}(\R^n)}:=\|f\|_{L^p(\R^n)}+\|g\|_{L^p(\R^n)}
\]
are equivalent if $s<k$ and $1\le u\le \min\{p,q\}$. In particular, if $q\ge p$, then
we can set $u=p$.

Triebel--Lizorkin spaces include fractional Sobolev spaces as a special case:
if $1<p<\infty$ and $0<s<1$, then $F^{s}_{pp}(\R^n)$ 
coincides with $W^{s,p}(\R^n)$, \cite[Theorem 6.7]{ds}  and
\cite[pp. 6--7]{T2}.

\subsection{$d$-sets and inequalities}\label{d-sets}
Recall definition of an Ahlfors $d$-regular set,
or, briefly,  $d$-set.

\begin{definition}\label{sset_def}
Let $0<d\le n$.
A closed set $S\subset \R^n$ is a {\em $d$-set}  if there is a constant $C>1$ such that
$C^{-1}r^d \le \mathcal{H}^d(Q(w,r)\cap S)\le Cr^d$
for every $w\in S$ and $0<r\le 1$.
\end{definition}

Note that if $S$ is a $d$-set, then
\begin{equation}\label{sset_rem}
c^{-1}r^d\le \mathcal{H}^d(Q(w,r)\cap S)\le cr^d 
\end{equation}
for every $w\in S$ and $0<r\le R$, where $R$ is any fixed positive number and the constant $c\ge 1$ depends on
parameters $R,C,d,n$.


\medskip
We write  $L^p(S)$ for the space of $p$-integrable functions on a 
$d$-set $S$ with respect to the natural Hausdorff measure $\mathcal{H}^d|_S$.

The following is a Remez type theorem, \cite{BrudnyiBrudnyi}.

\begin{theorem}\label{qremez}
Let $S\subset \mathbb{R}^n$ be a $d$-set, $n-1 < d\le n$. Suppose that $Q=Q(x_Q,r_Q)$ and $Q'=Q(x_{Q'},r_{Q'})$ 
are cubes  in $\R^n$ such that
$x_{Q'}\in S$, $Q'\subset Q$, and \[0<r_Q\le R\,r_{Q'}\le R^2\] for some $R>1$. Then,
for every polynomial $p$ of degree at most $k$, we have
\[
\bigg(\frac{1}{|Q|}\int_{Q}|p|^r\;dx\bigg)^{1/r}\le C
\bigg(\frac{1}{\mathcal{H}^d(Q'\cap S)}\int_{Q'\cap S}|p|^u\;d\mathcal{H}^d\bigg)^{1/u},
\]
where $1\le u,r\le\infty$ and the constant $C$
depends on $S,\,R,\,n,\,u,\,r,\,k$.
\end{theorem}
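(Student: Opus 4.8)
\medskip
\noindent\emph{Plan of proof.}
The plan is to reduce the asserted inequality, by scaling and finite dimensionality, to a Remez-type estimate on a single cube, and then to invoke the transversality of $d$-sets to hyperplanes. First, applying the affine map $x\mapsto x_{Q'}+r_{Q'}x$ — which sends polynomials of degree at most $k$ to polynomials of degree at most $k$, and turns every normalized integral occurring in the asserted inequality into the corresponding normalized integral for the image cubes and the image of $S$ — I may assume $x_{Q'}=0$ and $r_{Q'}=1$. Then $Q'=Q(0,1)$, the cube $Q\supset Q'$ has radius $r_Q\in[1,R]$ and is contained in the fixed cube $Q^\ast:=Q(0,2R)$, and $S$ is a $d$-set with $0\in S$ whose regularity constant is controlled by the original one together with $R$, via \eqref{sset_rem}. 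Since $\mathcal{P}_k$ is finite dimensional and $Q'$ has non-empty interior, the norms $\|\cdot\|_{L^\infty(Q^\ast)}$ and $\|\cdot\|_{L^\infty(Q')}$ are comparable on $\mathcal{P}_k$, with constant depending only on $n,k,R$, so that, for $1\le r\le\infty$,
\[
\bigl(\tfrac{1}{|Q|}\textstyle\int_Q|p|^r\,dx\bigr)^{1/r}\le\|p\|_{L^\infty(Q^\ast)}\le C\,\|p\|_{L^\infty(Q')}\,.
\]
It therefore remains to bound $\|p\|_{L^\infty(Q')}$ by the right-hand side of the claimed inequality.

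Normalizing $p$ so that $\mathcal{H}^d(Q'\cap S)^{-1}\int_{Q'\cap S}|p|^u\,d\mathcal{H}^d=1$, I apply Chebyshev's inequality (trivial when $u=\infty$): the set where $|p|>4^{1/u}$ has $\mathcal{H}^d$-measure at most $\tfrac14\mathcal{H}^d(Q'\cap S)$, so there is a set $B\subset Q'\cap S$ with $\mathcal{H}^d(B)\ge\tfrac34\mathcal{H}^d(Q'\cap S)\ge c_0>0$ on which $|p|\le 4^{1/u}$; here $c_0$ depends only on the regularity constant of $S$, because $0\in S$. The task is thus reduced to the following one-cube Remez inequality for $d$-sets, which is the content of \cite{BrudnyiBrudnyi} and is the crux of the matter: \emph{if $n-1<d\le n$, $p\in\mathcal{P}_k$ and $|p|\le 1$ on a subset $B$ of $Q(0,1)\cap S$ with $\mathcal{H}^d(B)\ge c_0>0$, then $\|p\|_{L^\infty(Q(0,1))}\le C$, with $C$ depending only on $n,d,k,c_0$ and the regularity constant of $S$.}

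The key geometric fact behind this inequality — and the precise point where $d>n-1$ enters — is that a $d$-set is quantitatively transverse to hyperplanes: for any hyperplane $H\subset\R^n$ and any $\delta\in(0,1)$, the slab $\{x:\dist(x,H)<\delta\}$ meets $Q'$ in a region covered by $\lesssim\delta^{-(n-1)}$ cubes of side $\delta$, each carrying $\mathcal{H}^d$-mass $\lesssim\delta^d$, so that $\mathcal{H}^d(\{x\in Q'\cap S:\dist(x,H)<\delta\})\lesssim\delta^{\,d-n+1}$, which tends to $0$ as $\delta\to0$ uniformly in $H$. Hence, after discarding a slab of small width, a definite proportion of the mass of $B$ survives away from any prescribed hyperplane. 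From here one argues by induction on $n$. In the base case $n=1$ the bound $\mathcal{H}^d(B)\ge c_0$ with $d>0$ forces $B$ to contain $k+1$ points that are $\tau$-separated for some $\tau=\tau(c_0,d,k)>0$ (otherwise $B$ would be covered by at most $k$ intervals of length $\tau$, whence $\mathcal{H}^d(B)\lesssim k\tau^d<c_0$ once $\tau$ is small), and Lagrange interpolation through these points bounds $\|p\|_{L^\infty([-1,1])}$ by a constant depending only on $\tau$ and $k$. In the inductive step one combines the transversality estimate with a slicing of $Q'$ by hyperplanes orthogonal to a suitably chosen coordinate direction — the relation $d-1>(n-1)-1$ keeping the induction hypothesis applicable — and then glues the resulting one-dimensional estimates along the fibers. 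The main obstacle is precisely this dimensional reduction: a $d$-set with $d<n$ has Lebesgue measure zero and, in general, empty or $\mathcal{H}^{d-1}$-null hyperplane slices, so the slicing cannot be carried out via the coarea formula or Fubini; instead one must work at a fixed positive scale, carefully tracking how $d$-regularity redistributes mass among the dyadic subcubes of $Q'$. That the restriction $d>n-1$ is indispensable is clear from the example $p(x)=x_n$, which vanishes identically on the $d$-set $\{x_n=0\}\cap Q'$ when $d=n-1$.
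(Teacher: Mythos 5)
The paper does not prove Theorem \ref{qremez} at all: it is quoted directly from the Remez-type inequalities of A.~Brudnyi and Yu.~Brudnyi \cite{BrudnyiBrudnyi}. Your proposal ultimately rests on the same citation, so in substance it follows the paper's route; what you add is a correct and useful reduction of the stated two-cube, $L^r$-versus-$L^u$ form to the core one-cube statement: rescale so that $Q'=Q(0,1)$ (the $d$-regularity constant surviving by \eqref{sset_rem} with loss depending only on $R$), use equivalence of norms on the finite-dimensional space $\mathcal{P}_k$ to dominate the normalized $L^r$-average over $Q\subset Q(0,2R)$ by $\|p\|_{L^\infty(Q')}$, and use Chebyshev's inequality to produce a measurable subset $B\subset Q'\cap S$ of definite $\mathcal{H}^d$-mass on which $|p|\le 4^{1/u}$. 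All of these steps are sound, and this is exactly the normalization under which the Brudnyi--Brudnyi inequality (which is stated for subsets of prescribed relative measure) is meant to be applied. Be aware, however, that your concluding sketch of the one-cube Remez inequality itself (transversality to hyperplanes plus induction on $n$) is not a proof: the inductive step --- the fixed-scale slicing and the gluing of one-dimensional estimates --- is precisely the hard part and is only gestured at, as you yourself concede; so the proposal is complete only insofar as it leans on \cite{BrudnyiBrudnyi}, which is also all the paper does. Your transversality bound $\mathcal{H}^d(\{x\in Q'\cap S:\dist(x,H)<\delta\})\lesssim\delta^{\,d-n+1}$ and the counterexample $p(x)=x_n$ on a hyperplane for $d=n-1$ are both correct and do identify where the hypothesis $d>n-1$ enters.
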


We  also need the following energy type estimate. 

\begin{lemma}\label{riesz_est}
Let $S\subset \mathbb{R}^n$ be a $d$-set, $n-1 < d<n$.
Suppose that  $1<r<\infty$ and $0<\omega<1$ satisfy
$\omega r>n-d$. Then,
for every point $x\in \partial\Omega$ and $0<t\le T\in \R$,
\begin{equation}\label{riesz_integral}
\bigg( \int_{Q(x,t)} \bigg\lvert
\int_{Q(y,t)\cap S} \frac{d\mathcal{H}^d (z)}{\lvert y-z\rvert^{n-\omega}}\,\bigg\rvert^{r'}\,dy\bigg)^{1/r'} 
\lesssim t^{d+\omega-n/r}\,,
\end{equation}
where $1/r+1/{r'}=1$.
\end{lemma}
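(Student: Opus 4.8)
The plan is to estimate the inner integral pointwise in $y$ by splitting the $d$-set $S$ into dyadic annuli around $y$, and then integrate the resulting bound over $Q(x,t)$. First I would fix $x\in\partial\Omega$ and $0<t\le T$, and for a given $y\in Q(x,t)$ decompose
\[
\int_{Q(y,t)\cap S}\frac{d\mathcal{H}^d(z)}{|y-z|^{n-\omega}}
=\sum_{k\ge 0}\int_{A_k}\frac{d\mathcal{H}^d(z)}{|y-z|^{n-\omega}},
\]
where $A_k=\{z\in S: 2^{-k-1}t<|y-z|\le 2^{-k}t\}$ together with the innermost ball where $|y-z|\le$ some fixed multiple of $t$ relative to the nearest point of $S$. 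On $A_k$ one has $|y-z|^{-(n-\omega)}\approx (2^{-k}t)^{-(n-\omega)}$, and by the upper $d$-regularity bound \eqref{sset_rem} (applied on the ball $Q(z_0,2^{-k}t)$ around a point $z_0\in S$ near $y$, using $t\le T$ so the constant is uniform) we get $\mathcal{H}^d(A_k)\lesssim (2^{-k}t)^d$. Since $d+\omega-n>-1<0$ is not needed here — what matters is $d-(n-\omega)=d+\omega-n$; because $\omega r>n-d$ with $r>1$ we only know $\omega>(n-d)/r$, which need not make $d+\omega-n$ positive — so I must be careful: the geometric series $\sum_k (2^{-k}t)^{d+\omega-n}$ need not converge. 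The fix is to keep the dependence on $y$: the innermost scale is governed by $\dist(y,S)$, so I would instead write the estimate as
\[
\int_{Q(y,t)\cap S}\frac{d\mathcal{H}^d(z)}{|y-z|^{n-\omega}}
\lesssim
\begin{cases}
t^{d+\omega-n}, & d+\omega-n>0,\\
\dist(y,S)^{d+\omega-n}, & d+\omega-n<0,\\
1+\log\frac{t}{\dist(y,S)}, & d+\omega-n=0,
\end{cases}
\]
uniformly for $y\in Q(x,t)$, $0<t\le T$; this is the standard Riesz-kernel-against-a-$d$-set bound.

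Next I would raise this to the power $r'$ and integrate over $y\in Q(x,t)$. In the case $d+\omega-n>0$ the bound is constant in $y$, giving $t^{(d+\omega-n)r'}\cdot t^n$, and the exponent is $(d+\omega-n)r'+n$; one checks $\tfrac1{r'}\big((d+\omega-n)r'+n\big)=d+\omega-n+\tfrac n{r'}=d+\omega-\tfrac nr$ since $\tfrac n{r'}=n-\tfrac nr$, which is exactly the claimed exponent. In the case $d+\omega-n\le 0$ the $y$-integral of $\dist(y,S)^{(d+\omega-n)r'}$ (or of the log power) over $Q(x,t)$ is finite precisely because $\partial\Omega$ is a $d$-set with $d<n$: decomposing $Q(x,t)$ into Whitney-type dyadic annuli $\{y: 2^{-j-1}t<\dist(y,S)\le 2^{-j}t\}$, whose Lebesgue measure is $\lesssim (2^{-j}t)^{n-d}t^d$ by the $d$-regularity (this is the familiar estimate $|\{y\in Q(x,t):\dist(y,S)\approx \rho\}|\lesssim \rho^{n-d}t^d$), one gets a geometric series in $j$ convergent at the top scale when $(d+\omega-n)r'+(n-d)>0$; using $\tfrac1{r'}=1-\tfrac1r$ this condition is equivalent to $\omega r>n-d$, which is exactly the hypothesis. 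Summing, the $y$-integral is $\lesssim t^{(d+\omega-n)r'+n}$ again, and the $r'$-th root gives $t^{d+\omega-n/r}$, as claimed.

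The main obstacle is the bookkeeping in the two borderline regimes $d+\omega-n\le 0$: one has to carry the local scale $\dist(y,S)$ through the inner annular sum rather than crudely bounding by the global scale $t$, and then the convergence of the outer sum over distance-annuli is what consumes the hypothesis $\omega r>n-d$. Both pieces — upper $d$-regularity of $S$ for the measure of the intersections $A_k$, and the Whitney-annulus volume estimate $|\{\dist(\cdot,S)\approx\rho\}\cap Q(x,t)|\lesssim \rho^{n-d}t^d$ — are immediate consequences of \eqref{sset_rem} (with the constant uniform on $0<r\le T$ by the stated scale-invariance up to a constant depending on $T$), so no new input beyond the $d$-set axioms is needed. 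Everything else is a routine summation of geometric series, and the exponent arithmetic above shows all cases land on the single exponent $d+\omega-n/r$.
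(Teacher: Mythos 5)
Your proof is correct, and it reaches the right exponent in every case, but it is organized differently from the paper's argument. The paper avoids your case analysis on the sign of $d+\omega-n$ by a weight-splitting trick: it picks an auxiliary exponent $\beta$ with $0<n-\omega-\beta<d$ and $\beta r'<n-d$ (possible precisely because $\omega r>n-d$), writes $\lvert y-z\rvert^{-(n-\omega)}\le \dist(y,S)^{-\beta}\lvert y-z\rvert^{-(n-\omega-\beta)}$ for $z\in S$, and then quotes two uniform estimates --- the layer-cake bound $\sup_y\int_{Q(y,t)\cap S}\lvert y-z\rvert^{-\sigma}d\mathcal{H}^d(z)\lesssim t^{d-\sigma}$ for $0<\sigma<d$, and the Aikawa-type bound $\int_{Q(x,t)}\dist(y,S)^{\mu-n}dy\lesssim t^{\mu}$ for $d<\mu<n$ (cited from Lehrb\"ack) --- so the whole proof collapses to one exponent computation with no sub-critical or logarithmic regime. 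You instead prove a pointwise-in-$y$ bound on the inner Riesz integral by dyadic annuli, keeping the local scale $\dist(y,S)$ when $d+\omega-n\le 0$, and then integrate over $Q(x,t)$ using the distance-annulus volume estimate $\lvert\{y\in Q(x,t):\dist(y,S)\approx\rho\}\rvert\lesssim\rho^{n-d}t^{d}$; your convergence condition $(d+\omega-n)r'+(n-d)>0$ is exactly $\omega r>n-d$, the same place the hypothesis is consumed in the paper (there it guarantees the admissible $\beta$ exists). The trade-off: your route is more self-contained, since your annulus volume bound is essentially a direct proof of the Aikawa estimate the paper cites, at the price of three cases (including the borderline logarithm); the paper's $\beta$-splitting buys a uniform, case-free estimate. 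Two small points to tidy up if you write this out: when bounding $\mathcal{H}^d(A_k)$ you should note that if $A_k\neq\emptyset$ then $A_k\subset Q(z_0,2^{1-k}t)\cap S$ for some $z_0\in S$, so the upper regularity \eqref{sset_rem} applies with constant uniform in $t\le T$; and you may restrict the outer integration to $\{y\in Q(x,t):\dist(y,S)\le t\}$, since otherwise $Q(y,t)\cap S=\emptyset$ and the inner integral vanishes.
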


\begin{proof}
Let us begin with two auxiliary estimates.
The first one,
\begin{equation}
\sup_{y\in\R^n}\int_{Q(y,t)\cap S} \frac{d\mathcal{H}^d(z)}{\lvert y-z\rvert^{\sigma}}
\lesssim t^{d-\sigma}\,,\quad 0<\sigma<d\,,
\end{equation}
follows easily from the
layer cake representation
and the definition of a $d$-set, \cite[p. 104]{JonssonWallin1984}. 

The
second one,
\begin{equation}\label{assouad}
\sup_{x\in S}\int_{Q(x,t)} \dist(y,S)^{\mu-n}\,dy\lesssim 
t^{\mu}\,,\quad d<\mu<n\,,
\end{equation}
is closely related to Aikawa dimension of $S$, and its proof 
can be found in \cite[Lemma 2.1]{lehrback}.

Denote the left hand side of \eqref{riesz_integral}
by $LHS$. Choose an auxiliary parameter $\beta>0$ such that
\begin{equation}\label{auxiliary}
0< n-\omega-\beta < d,\qquad \beta r'<n-d.
\end{equation}
The assumptions of the lemma ensure that this can be done (e.g. $\beta=(n-d)/r'-\varepsilon$ with suitable $\varepsilon>0$).

By the estimates above and the choice of $\beta$,
\begin{align*}
LHS&\le 
\bigg( \int_{Q(x,t)}
\dist(y,S)^{-\beta r'}
 \bigg\lvert
\int_{Q(y,t)\cap S} \frac{d\mathcal{H}^d (z)}{\lvert y-z\rvert^{n-\omega-\beta}}\bigg\rvert^{r'}\,dy\bigg)^{1/r'}\\
&\lesssim  t^{d-n+\omega+\beta + (n-\beta r')/r'}\,.
\end{align*}
A simplification of exponents finishes the proof.
\end{proof}

\subsection{Porous sets and Whitney decomposition} \label{porous_sect}

In this paper we widely use the fact that $d$-sets with $d<n$ are porous. 
The treatment here follows parts of \cite{ihnatsyeva}.

\begin{definition}\label{porous}
A  set $S\subset\R^n$ is 
{\em porous} (or {\em $\kappa$-porous}) if for some $\kappa\ge 1$
the following statement is true:
For every cube $Q(x,r)$ with $x\in\R^n$ and $0<r\le 1$ there is $y\in Q(x,r)$ such that
$Q(y,r/\kappa)\cap S=\emptyset$.
\end{definition}

\begin{remark}\label{porous_obs}
The observation that $d$-sets with $d<n$ are porous was already done in \cite{JonssonII}. See also Proposition 9.18 in \cite{T3} which gives this fact as a special case.
\end{remark}

Here we recall a 
reverse H\"older type inequality involving porous sets, Theorem \ref{reverse}.
For a set $S$ in $\R^n$ and a positive constant $\gamma>0$ we denote
\begin{equation}\label{c_definition}
\mathcal{C}_{S,\gamma} = \{Q\in\mathcal{D}\,:\,\gamma^{-1}\mathrm{dist}(x_Q,S)\le \ell(Q)\le 1\}.
\end{equation}
This is the family of dyadic cubes that are relatively close to the set.
Such families arise naturally while treating the `extension by zero' -problem, 
we refer to later Proposition \ref{ext_hardy}.

\begin{theorem}\label{reverse}
Suppose that $S\subset\R^n$ is porous.
Let $p,q\in (1,\infty)$ and $\{a_Q\}_{Q\in\mathcal{C}_{S,\gamma}}$ be a sequence of non-negative scalars. 
Then
\begin{equation}\label{rev}
\bigg\|\sum_{Q\in\mathcal{C}_{S,\gamma}} \chi_Q a_Q\bigg\|_p\le c\bigg\|\bigg(\sum_{Q\in\mathcal{C}_{S,\gamma}}
(\chi_{Q} a_Q)^q\bigg)^{1/q}\bigg\|_p.
\end{equation}
Here the constant $c$ depends on $n$, $p$, $\gamma$ and the set $S$.
\end{theorem}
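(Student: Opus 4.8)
\smallskip
\noindent\textbf{Proposed proof.}
The plan is to use porosity to attach to each cube $Q\in\mathcal{C}_{S,\gamma}$ a subcube $Q^{*}\subseteq Q$ of comparable size that stays uniformly away from $S$, and then to argue by duality against the Hardy--Littlewood maximal operator. First I would reduce to the case of a \emph{finite} subfamily $\mathcal{C}'\subseteq\mathcal{C}_{S,\gamma}$, proving \eqref{rev} for such $\mathcal{C}'$ with a constant independent of $\mathcal{C}'$; since the $a_Q$ are nonnegative, the general estimate then follows by monotone convergence on both sides. For a finite $\mathcal{C}'$ the functions $F:=\sum_{Q\in\mathcal{C}'}\chi_Q a_Q$ and $G:=\big(\sum_{Q\in\mathcal{C}'}(\chi_Q a_Q)^{q}\big)^{1/q}$ are bounded with compact support, so $\|F\|_p<\infty$ and I may use the duality $\|F\|_p=\sup\big\{\int F\phi:\phi\ge 0,\ \|\phi\|_{p'}\le 1\big\}$, with $1/p+1/p'=1$.

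The key geometric step, and the only place where porosity (rather than merely the measure bound for a $d$-set) is used, is the construction of the $Q^{*}$. Fix a porosity constant $\kappa$ of $S$ and $Q=Q(x_Q,r_Q)\in\mathcal{C}'$; since $\ell(Q)\le 1$, porosity gives $y_Q\in Q$ with $Q(y_Q,r_Q/\kappa)\cap S=\emptyset$. Intersecting $Q(y_Q,r_Q/(2\kappa))$ with $Q$ and taking a largest coordinate cube inside the intersection, I obtain a cube $Q^{*}\subseteq Q$ with $\ell(Q^{*})\ge c_{*}\ell(Q)$, where $c_{*}=(4\kappa)^{-1}$ — one only has to check the worst case $y_Q\in\partial Q$ — so that $|Q^{*}|\approx|Q|$, and with $\dist(z,S)\ge c_{*}\ell(Q)$ for every $z\in Q^{*}$. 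The resulting cubes $Q^{*}$, $Q\in\mathcal{C}_{S,\gamma}$, have bounded overlap: if $z\in Q^{*}$ then $\dist(z,S)\ge c_{*}\ell(Q)$, while $z\in Q$ with $Q\in\mathcal{C}_{S,\gamma}$ forces $\dist(z,S)\le\dist(x_Q,S)+r_Q\le(\gamma+\tfrac12)\ell(Q)$; hence $\ell(Q)$ ranges over at most $N=N(n,\gamma,\kappa)$ dyadic values, and for each value $z$ lies in just one dyadic cube, so $\sum_{Q\in\mathcal{C}_{S,\gamma}}\chi_{Q^{*}}\le N$ pointwise.

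With these two facts in hand the remainder is routine. For $\phi\ge 0$ and any $y\in Q^{*}\subseteq Q$ one has $|Q|^{-1}\int_Q\phi\le M\phi(y)$, so $\int_Q\phi\le c\,|Q^{*}|^{-1}|Q|\int_{Q^{*}}M\phi\le c\int_{Q^{*}}M\phi$. Summing against $a_Q$ and applying H\"older gives $\int F\phi=\sum_{Q\in\mathcal{C}'}a_Q\int_Q\phi\le c\,\big\|\sum_{Q\in\mathcal{C}'}a_Q\chi_{Q^{*}}\big\|_p\,\|M\phi\|_{p'}$. By the bounded overlap, the elementary inequality $\sum_{j=1}^{N}t_j\le N^{1-1/q}\big(\sum_{j=1}^{N}t_j^{q}\big)^{1/q}$, and $\chi_{Q^{*}}\le\chi_Q$, one obtains $\sum_{Q\in\mathcal{C}'}a_Q\chi_{Q^{*}}\le N^{1-1/q}G$ pointwise; and since $p'>1$, the maximal operator is bounded on $L^{p'}$, whence $\|M\phi\|_{p'}\le c$. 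Combining these and taking the supremum over $\phi$ yields $\|F\|_p\le c\,N^{1-1/q}\|G\|_p$ with $c=c(n,p,\gamma,S)$ (note $N^{1-1/q}\le N$), and letting $\mathcal{C}'\uparrow\mathcal{C}_{S,\gamma}$ completes the proof. The main obstacle is precisely the geometric step of the second paragraph: squeezing a hole of full size \emph{inside} $Q$ so that the measure comparison $|Q^{*}|\approx|Q|$ and the distance lower bound $\dist(\cdot,S)\gtrsim\ell(Q)$ on $Q^{*}$ hold simultaneously; once the companions $Q^{*}$ are available, duality and the maximal function theorem do the rest.
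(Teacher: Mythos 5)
Your proof is correct, and it implements exactly the ``maximal-function techniques'' that the paper invokes for this theorem (the paper itself gives no argument, referring instead to the companion paper \cite{ihnatsyeva}): the porosity-hole cubes $Q^*\subseteq Q$ with $|Q^*|\approx|Q|$ and $\dist(\cdot,S)\gtrsim\ell(Q)$ on $Q^*$, their bounded overlap forced by the definition of $\mathcal{C}_{S,\gamma}$, and the duality step $\int_Q\phi\lesssim\int_{Q^*}M\phi$ with the $L^{p'}$-boundedness of $M$ constitute the standard (and intended) route, here written out self-containedly. One cosmetic correction: since the cubes in $\mathcal{D}_j$ are closed, a point can lie in up to $2^n$ cubes of a fixed generation, so the overlap bound should be $\sum_{Q\in\mathcal{C}_{S,\gamma}}\chi_{Q^*}\le 2^nN$ rather than $N$, which only affects the constant.
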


The proof of this theorem
is a consequence of maximal-function techniques, we refer to \cite{ihnatsyeva}.

\medskip
Supposing that $S$ is a non-trivial closed set in $\R^n$, its complement has a Whitney decomposition, see e.g. \cite{S}.
That is, there is a family  $\mathcal{W}_{S}$ of  dyadic cubes
whose interiors are pairwise disjoint and 
$\mathbb{R}^n\setminus S=\bigcup_{Q\in \mathcal{W}_{S}}Q$. Furthermore,
if $Q\in \mathcal{W}_{S}$, then
\begin{equation}\label{dist_est}
\diam(Q)\le {\rm dist}(Q,S)\le 4\diam(Q).
\end{equation}
It is easy to check that the standard construction, usually given in terms of the Euclidean metric, admits this modification where we use the uniform metric instead.

\medskip

Let us write $\ell(Q)\overset{\kappa}{\sim} 2^{-i}$ if
$Q$ is a cube in $\R^n$ and $2^{-i}/5\kappa\le \diam(Q)\le 2^{-i}$.

\begin{lemma}\label{cop}
Suppose that $S\subset \R^n$ is $\kappa$-porous. Let
 $x\in S$ and $i\in\N$. Then there is $Q\in\mathcal{W}_{S}$ such that
$\ell(Q)\overset{\kappa}{\sim} 2^{-(i+1)}$ and 
$Q\subset Q(x,2^{-i})$. 
\end{lemma}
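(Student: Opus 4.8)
\textbf{Proof plan for Lemma \ref{cop}.}
The plan is to exhibit the required Whitney cube by starting from the porosity hole inside $Q(x,2^{-i})$ and tracking which Whitney cube contains a suitably chosen point in that hole. First I would apply Definition \ref{porous} to the cube $Q(x,2^{-i})$ (note $2^{-i}\le 1$ since $i\in\N$): there is $y\in Q(x,2^{-i})$ with $Q(y,2^{-i}/\kappa)\cap S=\emptyset$. In particular $y\notin S$, so $y$ lies in some Whitney cube $Q\in\mathcal{W}_S$, and I would like this $Q$ (or a controlled relative of it) to be the cube we want. The two things to verify are the size condition $\ell(Q)\overset{\kappa}{\sim}2^{-(i+1)}$ and the containment $Q\subset Q(x,2^{-i})$.

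For the size estimate I would use \eqref{dist_est}. On one hand, since the ball $Q(y,2^{-i}/\kappa)$ misses $S$, we have $\dist(y,S)\ge 2^{-i}/\kappa$ (actually $\ge 2^{-i-1}/\kappa$ comfortably, depending on whether the hole is open/closed — I'd use a half to be safe), and since $y\in Q$, $\dist(Q,S)\ge \dist(y,S)-\diam(Q)$; combined with $\dist(Q,S)\le 4\diam(Q)$ this forces $\diam(Q)\gtrsim 2^{-i}/\kappa$, giving the lower bound $2^{-(i+1)}/5\kappa\le\diam(Q)$ after adjusting constants. On the other hand, $\diam(Q)\le\dist(Q,S)\le\dist(y,S)$, and $\dist(y,S)\le \dist(x,S)+|x-y|$; but we don't know $\dist(x,S)$ — wait, $x\in S$ so $\dist(x,S)=0$ — hence $\dist(y,S)\le|x-y|\le 2^{-i}$ (uniform metric, $y\in Q(x,2^{-i})$). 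So $\diam(Q)\le 2^{-i}$. This gives $\ell(Q)\overset{\kappa}{\sim}2^{-(i+1)}$ possibly after shrinking: if $\diam(Q)$ is as large as $2^{-i}$ rather than $2^{-(i+1)}$, I would instead pass to $Q(x,2^{-(i+1)})$ from the start, i.e. run porosity at scale $2^{-(i+1)}$ to produce a hole of radius $2^{-(i+1)}/\kappa$ inside $Q(x,2^{-(i+1)})\subset Q(x,2^{-i})$; that automatically gives the upper bound $\diam(Q)\le\dist(y,S)\le 2^{-(i+1)}$, matching the indexing in the statement, and still keeps $Q$ headed toward being inside $Q(x,2^{-i})$.

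For the containment $Q\subset Q(x,2^{-i})$: every point $z\in Q$ satisfies $|z-y|\le\diam(Q)\le 2^{-(i+1)}$, and $|y-x|\le 2^{-(i+1)}$ by the choice of $y$ from porosity at scale $2^{-(i+1)}$, so $|z-x|\le 2^{-(i+1)}+2^{-(i+1)}=2^{-i}$, i.e. $z\in Q(x,2^{-i})$. Thus the cube $Q\in\mathcal{W}_S$ containing $y$ works. The main obstacle — really the only point requiring care — is the bookkeeping of constants in the size condition: one must choose the scale at which porosity is invoked ($2^{-i}$ vs.\ $2^{-(i+1)}$, possibly even $2^{-(i+2)}$) so that both the lower bound $\diam(Q)\ge 2^{-(i+1)}/5\kappa$ and the upper bound $\diam(Q)\le 2^{-(i+1)}$ come out with the exact normalizations in the definition of $\overset{\kappa}{\sim}$, while simultaneously keeping $\diam(Q)+|y-x|\le 2^{-i}$ for the containment. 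A clean way to organize this is: invoke porosity at scale $2^{-(i+1)}$ to get $y\in Q(x,2^{-(i+1)})$ with $Q(y,2^{-(i+1)}/\kappa)\cap S=\emptyset$, let $Q\ni y$ be its Whitney cube; then $\diam(Q)\le\dist(Q,S)\le\dist(y,S)\le 2^{-(i+1)}$ handles the upper bound and (via $|z-x|\le\diam(Q)+|y-x|\le 2^{-(i+1)}+2^{-(i+1)}$) the containment; and $\dist(y,S)\ge 2^{-(i+2)}/\kappa$ together with $\diam(Q)\ge\dist(Q,S)/4\ge(\dist(y,S)-\diam(Q))/4$ yields $\diam(Q)\ge 2^{-(i+2)}/(5\kappa)\ge 2^{-(i+1)}/(5\kappa)\cdot\tfrac12$, and one checks the constant $5$ in the definition absorbs this slack. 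That completes the proof.
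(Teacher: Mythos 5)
Your final organization is essentially the paper's own proof: invoke porosity at scale $2^{-(i+1)}$ to get $y\in Q(x,2^{-(i+1)})$ with $Q(y,2^{-(i+1)}/\kappa)\cap S=\emptyset$, take the Whitney cube $Q\in\mathcal{W}_S$ containing $y$, combine $\dist(Q,S)\ge\dist(y,S)-\diam(Q)$ and $\dist(Q,S)\le\dist(y,S)\le\|x-y\|_\infty\le 2^{-(i+1)}$ with \eqref{dist_est} for the size, and use the triangle inequality for $Q\subset Q(x,2^{-i})$. The one defect is your ``half to be safe'' bound $\dist(y,S)\ge 2^{-(i+2)}/\kappa$: running it through $5\diam(Q)\ge\dist(y,S)$ yields only $\diam(Q)\ge 2^{-(i+1)}/(10\kappa)$, and the constant $5$ in the definition of $\overset{\kappa}{\sim}$ does \emph{not} absorb this slack, so the last sentence of your argument is false as written. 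The halving is also unnecessary: whether the porosity cube is taken open or closed, its disjointness from $S$ means every $z\in S$ has $\|z-y\|_\infty\ge 2^{-(i+1)}/\kappa$, hence $\dist(y,S)\ge 2^{-(i+1)}/\kappa$, and the same chain then gives $\diam(Q)\ge 2^{-(i+1)}/(5\kappa)$, exactly the normalization required for $\ell(Q)\overset{\kappa}{\sim}2^{-(i+1)}$. With that one correction your proof coincides with the paper's.
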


\begin{proof}
By $\kappa$-porosity of $S$, 
there is $y\in Q(x,2^{-i-1})$ such that
$Q(y,2^{-i-1}/\kappa)\subset \R^n\setminus S$.
Let $Q\in\mathcal{W}_{S}$ be a cube containing the point $y$. Then
\[
2^{-i-1}/{\kappa}-\diam(Q)\le \mathrm{dist}(y,S)-\diam(Q)\le \mathrm{dist}(Q,S).
\]
On the other hand,
\[
\mathrm{dist}(Q,S)\le \mathrm{dist}(y,S)\le \|x-y\|_\infty\le 2^{-i-1}.
\]
By \eqref{dist_est}, 
$2^{-i-1}/5\kappa\le \diam(Q)\le 2^{-i-1}$, that is,
$\ell(Q)\overset{\kappa}{\sim} 2^{-(i+1)}$. 

It is also easy to see that $Q\subset Q(x,2^{-i})$.
\end{proof}

\section{Hardy inequalities}

In \S \ref{hardy_general_sect}
we prove our main result, Theorem \ref{hardy_general}.
Surprisingly, the proof of a restriction
theorem \cite[Theorem 4.8]{ihnatsyeva} can be modified
to yield a proof
of this Hardy type inequality. This proof is still  technical, and for this reason
in \S \ref{hardy_special} we consider an
illustrative special case whose proof is analogous
but involves less technicalities.
This simpler proof is based on \cite{E-HSV}, the main difference
being that the
energy type estimate in Lemma \ref{riesz_est} replaces
certain capacitary considerations. 
In order to make this
paper self contained, we repeat some of the arguments
in the aforementioned papers.

We say that a locally integrable function $f$ is strictly defined at $x$ if the limit
\[
\bar{f}(x)=\lim_{r\to 0+}  \fint_{Q(x,r)}f(y)\,dy
\]
exists. Observe that $f$ is strictly defined at its
Lebesgue points and, by the Lebesgue differentiation theorem, $f=\bar{f}$ a.e in $\R^n$.

Let $S$ be a $d$-set in $\R^n$, $n-1<d<n$.
At those points $x\in S$, in which $\bar{f}(x)$ exists, we define the  trace of the function $f$ on $S$ by 
\[\mathrm{Tr}_{S} f(x):=\bar{f}(x)\,.\]
Assume that
$f$ belongs to the Triebel--Lizorkin space $F_{pq}^{s}(\R^n)$
with $s>(n-d)/p$, $1< p<\infty$, $1\le q\le \infty$.
Then the trace of $f$
is defined $\mathcal{H}^d$-almost everywhere on $S$.
In fact, 
under these assumptions, the exceptional set for the Lebesgue points of $\bar f$ in $\R^n$ has zero $d$-dimensional Hausdorff measure.
In order to see this, apply the trivial embeddings 
$F^{s}_{pq}(\R^n)\subset F^{s-\epsilon}_{p2}(\R^n)$ with
$0<\epsilon<s-(n-d)/p$ and use 
\cite[Theorem 1.2.4, Theorem 6.2.1]{AH}.

\subsection{The case of $0<s<1$ and $q\le p$}\label{hardy_special}
As a corollary of Theorem \ref{fractional_hardy_fat}, 
we obtain Hardy type inequalities for Triebel--Lizorkin
spaces $F^{s}_{pq}(\R^n)$, $0<s<1$, $1\le q\le p$. Indeed, this follows
from boundedness of relations
$F^{s}_{pq}(\R^n)\subset F^{s}_{pp}(\R^n)\simeq W^{s,p}(\R^n)$, \cite[pp. 6--7]{T2}.
Recall also \eqref{frac_def}.


\begin{theorem}\label{fractional_hardy_fat}
Suppose that $S$ is a $d$-set in $\R^n$ and $(n-d)/p<s<1$, where $n-1<d<n$ and 
$1<p<\infty$.
Let $f\in W^{s,p}(\R^n)$
be such
that $\mathrm{Tr}_{S} f = 0$ pointwise
$\mathcal{H}^d$ almost everywhere.
Then $f$ satisfies the Hardy type inequality,
\[
\bigg(\int_{\R^n} \frac{\lvert f(x)\rvert^p}{\dist(x,S)^{sp}}dx\bigg)^{1/p}
\lesssim \lVert  f \rVert_{W^{s,p}(\R^n)}\,.
\]
\end{theorem}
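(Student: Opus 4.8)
The plan is to estimate the Hardy integral by decomposing $\R^n\setminus S$ into its Whitney cubes $\W_S$ and, on each Whitney cube $Q$, to replace $\dist(x,S)$ by $\diam(Q)$ (which is legitimate by \eqref{dist_est}) and to bound $|f(x)|$ on $Q$ in terms of the local oscillation of $f$. More precisely, I would fix a Whitney cube $Q$ with $\ell(Q)\overset{\kappa}{\sim}2^{-j}$, pick via Lemma \ref{cop} a nearby ``boundary'' cube $Q'=Q(w,2^{-j})$ with $w\in S$ and $Q\subset cQ'$, and compare $f(x)$ for $x\in Q$ with the average of $\mathrm{Tr}_S f$ over $Q'\cap S$. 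Since $\mathrm{Tr}_S f=0$ $\h^d$-a.e., that average is zero, so the task reduces to controlling
\[
\int_Q |f(x)|^p\,dx \lesssim \int_Q \Bigl| f(x)-\fint_{Q'\cap S} f\,d\h^d \Bigr|^p\,dx,
\]
and then summing the $\diam(Q)^{-sp}$-weighted versions of these over all $Q\in\W_S$.

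The key step is a pointwise bound of the right-hand side by a double integral of the Gagliardo type over a controlled region. For a.e.\ $x\in Q$ I would write, using that $f$ is strictly defined $\h^d$-a.e.\ on $S$,
\[
\Bigl| f(x)-\fint_{Q'\cap S} f\,d\h^d\Bigr|
\le \fint_{Q'\cap S} |f(x)-f(z)|\,d\h^d(z)
+ \fint_{Q'\cap S} \Bigl|f(z)-\lim_{r\to0}\fint_{Q(z,r)} f\Bigr|\,d\h^d(z),
\]
where the second term vanishes a.e.; the first term is then handled by a standard telescoping/chaining argument along scales $2^{-j}, 2^{-j-1},\dots$ connecting $x\in Q$ to $\h^d$-a.e.\ point $z\in Q'\cap S$, which produces an integral of $|f(u)-f(v)|$ over pairs $u,v$ at comparable small scales. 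After raising to the $p$th power, using Jensen/Hölder in the averages and the $d$-regularity lower bound $\h^d(Q'\cap S)\gtrsim 2^{-jd}$, I expect to arrive at
\[
\frac{1}{\diam(Q)^{sp}}\int_Q |f(x)|^p\,dx
\lesssim \sum_{i\ge j} 2^{(i-j)(\text{something})}\, 2^{isp}\!\!\int\!\!\int_{\substack{|u-v|\sim 2^{-i}\\ u,v\in cQ'}}\!\! \frac{|f(u)-f(v)|^p}{|u-v|^{sp+n}}\,|u-v|^{sp+n}\cdot 2^{-in}\,du\,dv,
\]
i.e.\ a weighted sum, over scales $i\ge j$, of Gagliardo energies localized near $Q'$; the energy-type bound of Lemma \ref{riesz_est} is exactly what controls the combinatorial factor coming from the $\h^d$-averages and replaces the capacitary estimate of \cite{E-HSV}.

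Finally I would sum over $Q\in\W_S$. Grouping Whitney cubes by generation and exploiting bounded overlap of the dilated cubes $cQ'$, together with the discrete Hardy inequality of Lemma \ref{LemmaHardyInequality} to absorb the inner sum $\sum_{i\ge j}$ (this is where $s>(n-d)/p$, equivalently the relevant exponent $\sigma$, is positive, guaranteeing convergence), the total is bounded by
\[
\sum_{i} 2^{isp}\!\!\int\!\!\int_{|u-v|\sim 2^{-i}} \frac{|f(u)-f(v)|^p}{|u-v|^{sp+n}}\cdot|u-v|^{sp}\,du\,dv \cdot(\dots)\ \lesssim\ \int_{\R^n}\!\int_{\R^n}\frac{|f(x)-f(y)|^p}{|x-y|^{sp+n}}\,dx\,dy,
\]
which is $\lesssim \|f\|_{W^{s,p}(\R^n)}^p$ by \eqref{frac_def}. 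The main obstacle, I expect, is the bookkeeping in the chaining estimate: making the region of the double integral genuinely local (contained in a fixed dilate of $Q'$, so that only cubes near $S$ contribute) and tracking the exponents carefully enough that the condition $s>(n-d)/p$ is precisely what makes Lemma \ref{riesz_est} applicable (with $\omega$ chosen so that $\omega r'>n-d$ after an appropriate duality) and the discrete Hardy sum convergent. Porosity of $S$ (hence Lemma \ref{cop}) is essential to produce the boundary cube $Q'$ of the right size sitting close to each Whitney cube.
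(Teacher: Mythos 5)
Your skeleton (Whitney decomposition, comparing $f$ on a cube $Q$ with the vanishing $\h^d$-average of $\mathrm{Tr}_S f$ over a nearby boundary cube $Q'$, chaining over dyadic scales, and Lemma \ref{LemmaHardyInequality} with $\sigma=sp-(n-d)>0$) is reasonable, and in fact it is closer in spirit to the paper's proof of the general Theorem \ref{hardy_general} than to its dedicated proof of Theorem \ref{fractional_hardy_fat}, which instead fixes auxiliary parameters $r<p$ and $\omega=n(1/p-1/r)+s<\omega r/r<s$ with $n-d<\omega r<sp$, controls the oscillation term by the fractional Poincar\'e inequality (Lemma \ref{hardy_cube}), controls the single average $f_{B_{Q^*}}$ by the pointwise Riesz-potential representation (Lemma \ref{integral_representation}) integrated against $\h^d$ and dualized via H\"older with Lemma \ref{riesz_est}, and then sums over all Whitney cubes at once with the maximal-function Carleson estimate (Lemma \ref{carleson_lemma}, exponent $p/r>1$). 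The genuine gap in your proposal is exactly the step that these devices are engineered to handle: the final summation over all scales and all Whitney cubes. Your telescoping produces, at scale $i\ge j$, quantities of the form $\fint_{Q(z,2^{-i})}\lvert f-f_{Q(z,2^{-i})}\rvert$ with $z\in S$; these involve \emph{all} pairs $u,v\in Q(z,2^{-i})$, not only pairs with $\lvert u-v\rvert\sim 2^{-i}$, so the annular localization you write into your displayed formula is not what the chaining actually gives. If, as your sketch suggests, you insert the Gagliardo kernel (or apply a Poincar\'e inequality) scale by scale and then invoke ``bounded overlap of the dilated cubes $cQ'$'', the overlap is bounded only within a fixed generation: a fixed pair $(u,v)$ is recounted at every scale $2^{-i}\gtrsim\max(\lvert u-v\rvert,\dist(u,S))$, which yields an unbounded logarithmic overcount rather than $\lVert f\rVert_{W^{s,p}(\R^n)}^p$. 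Neither Lemma \ref{LemmaHardyInequality} nor bounded overlap repairs this, and you do not address it.

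Two further points. First, your use of Lemma \ref{riesz_est} is misplaced: in a Lebesgue-average chaining scheme the $\h^d$-averages are handled by the two-sided $d$-regularity of $S$ together with Fubini (as in the paper's Lemma \ref{EqLocalApInLpSViaLocalApInLp}), whereas Lemma \ref{riesz_est} is an $L^{r'}$ bound for the Riesz potential of $\h^d\lvert_S$ designed to be paired, via H\"older, against an $L^r$ energy density with $r<p$ coming from Lemma \ref{integral_representation}; your sketch never introduces such an exponent $r$ (and the condition you quote, $\omega r'>n-d$, is not the hypothesis $\omega r>n-d$ of the lemma). Second, the chaining route can in fact be salvaged, but by a different mechanism than the one you state: either keep plain double averages $\fint\fint_{Q(z,2^{-i})^2}\lvert f(u)-f(v)\rvert^p\,du\,dv$ and insert the kernel $\lvert u-v\rvert^{-(n+sp)}$ only once at the very end, so that the geometric growth of the weights $2^{i(sp+n)}$ makes the sum over admissible scales comparable to its largest term; or, as the paper does in the general case, replace scale-$i$ energies by the sharp function $f^\sharp_{s,p}$ and use porosity (Lemma \ref{cop}, Lemma \ref{subestTL}) to confine each scale's contribution to an essentially disjoint Whitney layer. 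As written, your proposal asserts the conclusion of this step without an argument, so it does not yet constitute a proof.
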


The proof is based upon \cite{E-HSV}.

We need some preparations. Let us first
recall fractional  inequalities
for functions defined on cubes, 
 Lemma~\ref{integral_representation} and
Lemma~\ref{hardy_cube}.
Both inequalities are invariant under
scaling and translation.

The following lemma follows from the proof of
\cite[Theorem 4.10]{H-SV}.

\begin{lemma}\label{integral_representation}
Let $\sigma\in (0,1)$ and  $r\in (1,\infty)$. 
Let $u\in L^r_{\textup{loc}}(Q)$ for a cube  $Q$ in $\R^n$, $n\ge 2$. Then
\begin{equation}
\vert u(x)-u_{B_Q}\vert \le c_{n,\sigma,r}
\int_{Q}\frac{g_{Q}(y)}{\vert x-y\vert ^{n-\sigma}}\,dy
\end{equation}
if $x\in Q$ is a Lebesgue point of $u$.
Here  $B_Q:=B(x_Q,\ell(Q)/c_n)$, $c_n>2$, and the function
$g_Q$ is defined by
 \begin{equation*}
g_{Q}(y)=\biggl(\int_{Q}\frac{\vert u(y)-u(z)\vert ^r}{\vert y-z\vert ^{n+\sigma r}}\,dz\biggr)^{1/r}\,.
\end{equation*}
\end{lemma}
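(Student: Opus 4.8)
The plan is to derive the pointwise bound from a telescoping chain of cube averages, following the proof of \cite[Theorem 4.10]{H-SV}. We may assume that the right-hand side is finite, the inequality being otherwise trivial. Fix a Lebesgue point $x\in Q$ of $u$ and construct a nested family of cubes $Q=P_0\supset P_1\supset P_2\supset\cdots$ with $x\in P_j$ and $\ell(P_{j+1})=\ell(P_j)/2$ for all $j\ge 0$, simply by subdividing the current cube into $2^n$ congruent dyadic subcubes and retaining one that contains $x$. Each $P_j$ contains $x$, is contained in a Euclidean ball centered at $x$ of radius comparable to $\ell(P_j)$, and has measure comparable to that of the ball; hence the Lebesgue point property gives $\fint_{P_j}u\to u(x)$ as $j\to\infty$. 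Since $c_n>2$, the Euclidean ball $B_Q$, of radius $\ell(Q)/c_n<\ell(Q)/2$, is contained in $Q=P_0$ and $\lvert B_Q\rvert\sim\lvert Q\rvert$. Therefore
\[
u(x)-u_{B_Q}=\bigl(u_{P_0}-u_{B_Q}\bigr)+\sum_{j=0}^\infty\bigl(u_{P_{j+1}}-u_{P_j}\bigr),
\]
and it suffices to estimate each term on the right.

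The key ingredient is an oscillation estimate: if $A\subset P$, where $P$ is a cube contained in $Q$ and $\lvert P\rvert\le c\lvert A\rvert$, then, writing $u_A-u_P$ as a double average over $A\times P$,
\[
\lvert u_A-u_P\rvert\le\fint_A\fint_P\lvert u(y)-u(z)\rvert\,dz\,dy\lesssim\ell(P)^{\sigma-n}\int_A g_Q(y)\,dy.
\]
This follows by applying Jensen's inequality in the variable $z$, inserting the factor $\lvert y-z\rvert^{n+\sigma r}$ and bounding it by $\lesssim\ell(P)^{n+\sigma r}$ for $y,z\in P$, using $\lvert P\rvert^{1/r}\sim\ell(P)^{n/r}$, and finally enlarging the inner domain of integration from $P$ to $Q$, which produces $g_Q(y)^r$. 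Applying this with $(A,P)=(P_{j+1},P_j)$ and with $(A,P)=(B_Q,P_0)$ reduces the problem to estimating $\sum_{j\ge 0}\ell(P_j)^{\sigma-n}\int_{P_{j+1}}g_Q$ together with the single term $\ell(Q)^{\sigma-n}\int_Q g_Q$.

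For the sum I would use that the $P_j$ are nested, so that, up to the null set $\{x\}$, $P_1$ is the disjoint union of the annuli $P_k\setminus P_{k+1}$; exchanging the order of summation,
\[
\sum_{j=0}^\infty\ell(P_j)^{\sigma-n}\int_{P_{j+1}}g_Q(y)\,dy=\sum_{k=1}^\infty\Bigl(\sum_{j=0}^{k-1}\ell(P_j)^{\sigma-n}\Bigr)\int_{P_k\setminus P_{k+1}}g_Q(y)\,dy.
\]
Since $\ell(P_j)=2^{-j}\ell(Q)$ and $n-\sigma>0$, the inner sum is geometric and comparable to its largest term $\ell(P_k)^{\sigma-n}$; moreover, for $y\in P_k$ one has $\lvert x-y\rvert\le\diam(P_k)\lesssim\ell(P_k)$, so, because $\sigma-n<0$, $\ell(P_k)^{\sigma-n}\lesssim\lvert x-y\rvert^{\sigma-n}$. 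Substituting this and using that the sets $P_k\setminus P_{k+1}$ are pairwise disjoint subsets of $Q$ bounds the series by $\int_Q g_Q(y)\lvert x-y\rvert^{\sigma-n}\,dy$. The remaining term $\ell(Q)^{\sigma-n}\int_Q g_Q$ is treated in the same way, using $\lvert x-y\rvert\le\diam(Q)\lesssim\ell(Q)$ for $y\in Q$. Adding the two estimates gives the lemma.

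The step I expect to require the most care is essentially organisational: the chain $\{P_j\}$ has to shrink to $x$ while staying inside $Q$, which is exactly what lets a single function $g_Q$ --- defined by integration over all of $Q$ --- control every local oscillation, and the telescoping sum over the \emph{nested} cubes $P_{j+1}$ must be reindexed over the \emph{disjoint} annuli $P_k\setminus P_{k+1}$ before it is compared with the Riesz-type kernel $\lvert x-y\rvert^{\sigma-n}$. All of the individual inequalities along the way are routine applications of Jensen's and Hölder's inequalities.
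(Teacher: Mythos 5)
Your proof is correct: the telescoping chain of dyadic subcubes shrinking to the Lebesgue point, the oscillation estimate $\lvert u_A-u_P\rvert\lesssim \ell(P)^{\sigma-n}\int_A g_Q$ obtained via Jensen's inequality and the trivial bound $\lvert y-z\rvert\lesssim\ell(P)$, and the reindexing over the disjoint annuli $P_k\setminus P_{k+1}$ to recover the kernel $\lvert x-y\rvert^{\sigma-n}$ all check out, and the Lebesgue-point justification of $u_{P_j}\to u(x)$ is handled properly. The paper itself gives no written proof but refers to the proof of \cite[Theorem 4.10]{H-SV}, which is exactly this kind of chaining/telescoping argument (there carried out along chains of cubes in John domains), so your self-contained argument specialized to a single cube is essentially the same approach.
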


The following 
fractional Sobolev--Poincar\'e inequality is a consequence of
\cite[Remark 4.14]{H-SV}.

\begin{lemma}\label{hardy_cube}
Let $Q$ be a cube in $\R^n$, $n\ge 2$.
Suppose that $p,r\in [1,\infty)$, and $s \in (0,1)$ satisfy
$0\le 1/r-1/p<s /n$.
Then, for every $u\in L^r(Q)$,
\[
\int_{Q} |u(x)-u_Q|^p\,dx \le c|Q|^{1+ps  /n-p/r} \bigg(\int_{Q}\int_{Q} 
\frac{|u(x)-u(y)|^r}{|x-y|^{n+sr}}\,dy\,dx\bigg)^{p/r}.
\]
Here the constant $c>0$ is independent of $Q$ and $u$.
\end{lemma}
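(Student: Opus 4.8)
The plan is to reduce to the unit cube by scaling, then to bound $u-u_{Q}$ pointwise by a Riesz potential using Lemma~\ref{integral_representation}, and finally to conclude with the Hardy--Littlewood--Sobolev fractional integration theorem. \emph{First, the reduction.} Both sides transform covariantly under the affine map $x=x_Q+\ell(Q)y$: writing $v(y):=u(x_Q+\ell(Q)y)$ and $Q_0:=[0,1]^n$, one has $v_{Q_0}=u_Q$, moreover $\int_Q\lvert u-u_Q\rvert^p\,dx=\ell(Q)^n\int_{Q_0}\lvert v-v_{Q_0}\rvert^p\,dy$ and the Gagliardo double integral over $Q$ equals $\ell(Q)^{n-sr}$ times its analogue for $v$ over $Q_0$. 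A direct bookkeeping of the powers of $\lvert Q\rvert=\ell(Q)^n$ shows that the asserted estimate on $Q$ is equivalent to the same estimate on $Q_0$ with $\lvert Q_0\rvert=1$, so it suffices to treat $Q_0$.

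\emph{Second, a pointwise bound.} On $Q_0$ I would use that the average is quasi-minimising, namely $\lVert u-u_{Q_0}\rVert_{L^p(Q_0)}\le 2\lVert u-c\rVert_{L^p(Q_0)}$ for every constant $c$ (triangle inequality together with $\lvert c-u_{Q_0}\rvert\le\lvert Q_0\rvert^{-1/p}\lVert u-c\rVert_{L^p(Q_0)}$). Taking $c=u_{B_{Q_0}}$ and applying Lemma~\ref{integral_representation} with $\sigma=s$, for a.e.\ $x\in Q_0$ one gets $\lvert u(x)-u_{B_{Q_0}}\rvert\le C\,I_s(g\chi_{Q_0})(x)$, where $I_s h(x):=\int_{\R^n}\lvert x-y\rvert^{s-n}h(y)\,dy$, $g(y):=\big(\int_{Q_0}\lvert u(y)-u(z)\rvert^r\lvert y-z\rvert^{-n-sr}\,dz\big)^{1/r}$, and $\lVert g\rVert_{L^r(Q_0)}^r$ is exactly the Gagliardo integral on the right-hand side. (For $r=1$ the same pointwise estimate holds via the identical telescoping-average argument underlying Lemma~\ref{integral_representation}.)

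\emph{Third, fractional integration.} It then remains to prove $\lVert I_s(g\chi_{Q_0})\rVert_{L^p(Q_0)}\lesssim\lVert g\rVert_{L^r(Q_0)}$; raising to the power $p$ and undoing the scaling of Step~one finishes the proof. Note the hypothesis forces $r\le p$. If $r>1$ and $sr<n$, set $q:=nr/(n-sr)$, so $1/q=1/r-s/n<1/p$, hence $r<p<q<\infty$; the Hardy--Littlewood--Sobolev theorem (see e.g.\ \cite{AH}) gives $\lVert I_s(g\chi_{Q_0})\rVert_{L^q(\R^n)}\lesssim\lVert g\rVert_{L^r(Q_0)}$, and since $\lvert Q_0\rvert=1$ and $p<q$, H\"older's inequality on $Q_0$ upgrades this to the $L^p(Q_0)$ bound. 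If $r>1$ and $sr\ge n$, I would first pass to an exponent $\tilde r\in(1,\min\{r,n/s\})$ chosen close enough to $n/s$ that $1/\tilde r-s/n<1/p$ (possible since $s<1\le n$), use $\lVert g\rVert_{L^{\tilde r}(Q_0)}\le\lVert g\rVert_{L^r(Q_0)}$, and invoke the case $s\tilde r<n$. Finally, if $r=1$ then automatically $s=sr<n$ and the bound follows from Young's convolution inequality, since $I_s(g\chi_{Q_0})\le\big(\lvert\cdot\rvert^{s-n}\chi_{\{\lvert\cdot\rvert\le\diam Q_0\}}\big)*(g\chi_{Q_0})$ on $Q_0$ and the kernel lies in $L^p$ precisely when $(n-s)p<n$, which is the $r=1$ hypothesis.

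\emph{Main obstacle.} The only non-elementary ingredient is Lemma~\ref{integral_representation}, which is already available; given it, the remaining subtlety is that the assumption $1/r-1/p<s/n$ places $p$ strictly below (or, when $p=r$, at) the critical Sobolev exponent $nr/(n-sr)$ and also allows $sr\ge n$, so the fractional integration theorem cannot be used at the exact target exponent. This is harmless but must be tracked carefully: it is circumvented by H\"older's inequality on the bounded cube $Q_0$ after descending to the critical exponent, with the endpoint $r=1$ handled separately through Young's inequality.
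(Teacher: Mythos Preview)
Your proof is correct. The paper does not actually supply a proof of this lemma---it simply cites \cite[Remark 4.14]{H-SV}---so there is no in-paper argument to compare against. Your route (scale to the unit cube, invoke Lemma~\ref{integral_representation} to get the pointwise Riesz-potential bound, then apply Hardy--Littlewood--Sobolev and H\"older on the bounded cube) is the natural one and is almost certainly what the cited reference does, since Lemma~\ref{integral_representation} itself comes from \cite[Theorem~4.10]{H-SV}. Your case analysis is sound: when $sr<n$ the subcritical exponent $p<q=nr/(n-sr)$ lets H\"older close the gap; when $sr\ge n$ you correctly drop to a smaller $\tilde r$ (possible because $s<1$ forces $n/s>1$); and for $r=1$ the Young-inequality argument with the truncated kernel $|\cdot|^{s-n}\chi_{\{|\cdot|\le\operatorname{diam}Q_0\}}\in L^p$ (equivalent to $(n-s)p<n$, i.e.\ the hypothesis) is clean. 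The one informal spot is the assertion that the telescoping proof behind Lemma~\ref{integral_representation} also covers $r=1$; this is true (the chain-of-averages argument only uses H\"older with exponent $r\ge 1$ to pass from oscillations to $g_Q$), but since Lemma~\ref{integral_representation} as stated in the paper requires $r>1$, you might either note this extension explicitly or, more simply, bypass it: for $r=1$ one can argue directly that $|u(x)-u_{Q_0}|\le\fint_{Q_0}|u(x)-u(z)|\,dz\lesssim\int_{Q_0}|x-z|^{s-n}\cdot|x-z|^{n-s}\cdot|x-z|^{-(n+s)}|u(x)-u(z)|\,dz$ does not quite work, so sticking with your Young-after-telescoping approach is the right call.
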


The last lemma we need is the following.

\begin{lemma}\label{carleson_lemma}
Assume that $S$ is a non-trivial closed set in $\R^n$, $n\ge 2$,
and let $\mathcal{W}_S$ be a
Whitney decomposition of $\R^n\setminus S$.
Suppose that $1<s<\infty$ and $\kappa\ge 1$. Then
\begin{equation}\label{beta_ineq}
\begin{split}
&\sum_{Q\in\mathcal{W}_S} |\kappa Q|^{2}\bigg(\fint_{\kappa Q}\fint_{\kappa Q} |g(x,y)|\,dx\,dy\bigg)^s
\\&\qquad\qquad\qquad\le c_{n,s} \kappa^{n}\int_{\R^n}\int_{\R^n}  |g(x,y)|^s\,dx\,dy
\end{split}
\end{equation}
for every $g\in L^s(\R^n\times \R^n)$.
\end{lemma}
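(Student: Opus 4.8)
The plan is to reduce the weighted sum over Whitney cubes to a single integral over $\R^n\times\R^n$ by a standard Carleson-measure / bounded-overlap argument, combined with Jensen's inequality to handle the average raised to the power $s>1$. First I would apply Jensen's inequality (equivalently, Hölder with exponents $s$ and $s'$) to each double average: since $|\kappa Q|^{-2}=(\fint_{\kappa Q}\fint_{\kappa Q}dx\,dy)$ is a probability measure on $\kappa Q\times\kappa Q$, I get
\[
\bigg(\fint_{\kappa Q}\fint_{\kappa Q}|g(x,y)|\,dx\,dy\bigg)^s\le \fint_{\kappa Q}\fint_{\kappa Q}|g(x,y)|^s\,dx\,dy=\frac{1}{|\kappa Q|^2}\int_{\kappa Q}\int_{\kappa Q}|g(x,y)|^s\,dx\,dy .
\]
Multiplying by $|\kappa Q|^2$ cancels the normalizing factors, so the left-hand side of \eqref{beta_ineq} is bounded by $\sum_{Q\in\mathcal{W}_S}\int_{\kappa Q}\int_{\kappa Q}|g(x,y)|^s\,dx\,dy$.

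Next I would bound $\sum_{Q\in\mathcal{W}_S}\chi_{\kappa Q}(x)\chi_{\kappa Q}(y)\le \sum_{Q\in\mathcal{W}_S}\chi_{\kappa Q}(x)$, which in turn is controlled by a constant $c_n\kappa^n$ depending only on $n$ and $\kappa$. This is the bounded-overlap property of dilated Whitney cubes: the cubes $\{Q\}_{Q\in\mathcal{W}_S}$ have pairwise disjoint interiors, and by \eqref{dist_est} any two Whitney cubes meeting a common dilate $\kappa Q$ have comparable side lengths and lie within a bounded multiple of $\ell(Q)$ of each other, so a fixed point $x$ lies in at most $C(n)\kappa^n$ of the cubes $\kappa Q$. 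A short volume-counting argument using \eqref{dist_est} makes the exponent $\kappa^n$ explicit. Feeding this into the displayed bound via Tonelli's theorem gives
\[
\sum_{Q\in\mathcal{W}_S}\int_{\kappa Q}\int_{\kappa Q}|g(x,y)|^s\,dx\,dy\le c_n\kappa^n\int_{\R^n}\int_{\R^n}|g(x,y)|^s\,dx\,dy ,
\]
which is the claimed inequality with $c_{n,s}=c_n$ (the dependence on $s$ is vacuous here, but harmless).

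The only point requiring genuine care is the bounded-overlap estimate and the precise power of $\kappa$: one must verify, purely from \eqref{dist_est}, that if $x\in\kappa Q$ then $\ell(Q)$ is comparable (with constants depending on $\kappa$) to $\dist(x,S)$, hence all such $Q$ have side lengths in a fixed dyadic range and are contained in a ball of radius $\sim\kappa\,\dist(x,S)$ about $x$; disjointness of interiors then bounds their number by the ratio of volumes, giving $\lesssim\kappa^n$. Everything else is Jensen plus Tonelli and is routine; in particular no properties of $S$ beyond it being a non-trivial closed set, and no porosity, are needed.
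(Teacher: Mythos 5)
Your first step (Jensen) is fine, but the second step is where the proof breaks: the claimed bounded overlap $\sum_{Q\in\mathcal{W}_S}\chi_{\kappa Q}(x)\le c_n\kappa^n$ is false for large $\kappa$, and the lemma is applied in the paper precisely with a large dilation ($Q^*=12Q$). From \eqref{dist_est} you only get the one-sided bound $\ell(Q)\gtrsim \dist(x,S)/\kappa$ for $x\in\kappa Q$; the reverse comparison $\ell(Q)\lesssim_\kappa \dist(x,S)$ fails as soon as $\kappa$ is large enough that $\kappa Q$ can reach across $S$. Concretely, take $S=\{0\}$ and $\kappa\ge 10$: every Whitney cube $Q$ of side $2^{j}$ has its center within $5\cdot 2^{j}$ of the origin, so any fixed $x\ne 0$ lies in $\kappa Q$ for Whitney cubes of \emph{every} sufficiently large dyadic scale, and $\sum_{Q}\chi_{\kappa Q}(x)=\infty$. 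A second sanity check: your argument uses $s>1$ only through Jensen, so if it were correct it would also prove the case $s=1$, where Jensen is an identity; but for $s=1$ the inequality \eqref{beta_ineq} is actually false (same $S$, $g=\chi_{B\times B}$ with $B$ a small ball near the origin makes the left side infinite). So the gap is not a matter of constants; the bounded-overlap route cannot work for general $\kappa\ge 1$, only for dilations small enough (roughly $\kappa<3$) that $\kappa Q$ stays away from $S$.

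The paper's proof avoids this by never dilating in one of the two variables: it writes $|\kappa Q|^{2}=\kappa^{n}\int\chi_{\kappa Q}(z)\,dz\int\chi_{Q}(w)\,dw$, so the relevant overlap is $\sum_{Q}\chi_{\kappa Q}(z)\chi_{Q}(w)\le\sum_{Q}\chi_{Q}(w)\lesssim 1$, using only the essential disjointness of the \emph{undilated} Whitney cubes. The average $\fint_{\kappa Q}\fint_{\kappa Q}|g|$ is then dominated, for $(z,w)\in\kappa Q\times Q$, by the non-centered Hardy--Littlewood maximal function $\mathcal{M}g(z,w)$ on $\R^{2n}$, and the $L^{s}(\R^{2n})$-boundedness of $\mathcal{M}$ for $s>1$ gives \eqref{beta_ineq}; this is exactly where the hypothesis $s>1$ and the $s$-dependence of $c_{n,s}$ enter. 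To repair your argument you would need to replace the bounded-overlap step by such a maximal-function (or dyadic Carleson-embedding/stopping-time) argument that exploits disjointness of the cores $Q$ rather than of the dilates $\kappa Q$.
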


\begin{proof}
Throughout this proof, we denote $m=2n$.
Recall 
a (non-centered) maximal function of a locally integrable function
$f:\R^{m}\to [-\infty,\infty]$,
\[
\mathcal{M} f(x) = \sup_{x\in Q} \fint_{Q} |f(y)|\,dy\,.
\]
The supremum is taken over all cubes in
$\mathbb{R}^m$ containing $x\in \mathbb{R}^m$.

Let us rewrite the left hand side of inequality \eqref{beta_ineq} as
\begin{align*}
LHS=\kappa^{n}&\sum_{Q\in\mathcal{W}_S} \int_{\R^n}\int_{\R^n} 
\chi_{\kappa Q}(z)\chi_{Q}(w)
 \bigg(\fint_{\kappa Q} \fint_{\kappa Q}|g(x,y)|\,dx\,dy\bigg)^s
\,dz\,dw.
\end{align*}
By definition of maximal function,
\begin{align*}
\kappa^{-n}LHS&\lesssim \sum_{Q\in\mathcal{W}_S}\int_{\R^n}\int_{\R^n} \chi_{\kappa Q}(z)\chi_Q(w) \big\{\mathcal{M} g(z,w)\big\}^s\,dz\,dw\\
&\lesssim \int_{\R^n}\int_{\R^n} \big\{\mathcal{M} g(z,w)\big\}^s\,dz\,dw\,.
\end{align*}
The boundedness of maximal operator
on $L^s(R^m)$ yields
the right hand side of inequality \eqref{beta_ineq}.
\end{proof}

\begin{proof}[Proof of Theorem \ref{fractional_hardy_fat}]
Fix a number $r\in (1,p)$ such that $\omega=n(1/p-1/r)+s \in (0,s )$
and
 \begin{equation}\label{mu2_def}
n-d<\omega r <sp,\qquad 0<1/r-1/p<\omega/n\,.
 \end{equation}
 For each  $Q\in\mathcal{W}_S$ with $\ell(Q)\le 1$, we write
$Q^*=12 Q$. We let
$B_{Q^*}=B(x_Q,\ell(Q^*)/c_n)$ be 
given by Lemma \ref{integral_representation}.
The proof proceeds with an application of inequality,
\begin{equation}\label{upper_bound}
\begin{split}
\int_{Q} &\lvert f(x)-f_{B_{Q^*}}\rvert^p\,dx +
\lvert Q\rvert \lvert f_{B_{Q^*}}\rvert^p\\
&\lesssim
|Q^*|^{1+p\omega /n-p/r} \bigg(\int_{Q^*}\int_{Q^*} 
\frac{|f(x)-f(y)|^r}{|x-y|^{n+\omega r}}\,dy\,dx\bigg)^{p/r}.
\end{split}
\end{equation}
Let us postpone the proof
of this inequality for the time being, and finish with the main line of the argument first.
By property \eqref{dist_est} of Whitney cubes and inequality \eqref{upper_bound},
\begin{align*}
I:&=\int_{\R^n}
\frac{\vert f(x)\rvert^p}
{\dist (x,S)^{sp}} \,dx\\
&\le
\sum _{Q\in \W_S}
\diam (Q)^{-sp}
\int_{Q}\vert f(x)\vert^p\,dx\\
&\lesssim \lVert f\rVert_{L^p(\R^n)}^p + 
\sum _{\substack{Q\in \W_S\\\ell(Q)\le 1}}
|Q^*|^{1+ p(\omega /n+1/r-s/n)} \bigg(\fint_{Q^*}\fint_{Q^*} 
\frac{|f(x)-f(y)|^r}{|x-y|^{n+\omega r}}\,dy\,dx\bigg)^{p/r}\,.
\end{align*}
The definition of $\omega$ implies that
\begin{equation*}
1+ p(\omega /n+1/r-s/n)= 2\,.
\end{equation*}
Thus, Lemma \ref{carleson_lemma} with $s=p/r>1$ yields
\begin{align*}
I&\lesssim \lVert f\rVert_{L^p(\R^n)}^p+
\int_{\R^n}\int_{\R^n} 
\frac{\vert f(x)-f(y)\vert^p}{\vert x-y\vert ^{ps  +n}}
\,dx\,dy \lesssim \lVert f\rVert_{W^{s,p}(\R^n)}^p\,.
\end{align*}
Here we also used the identity $p(n+\omega r)/r = ps+n$.

It remains to verify inequality \eqref{upper_bound}.
Using the facts that $B_{Q^*}\subset Q^*$ and the measures are comparable,
this bound for the integral term is a consequence of Lemma \ref{hardy_cube}.
Hence, it is enough to estimate the remaining term 
$\lvert Q\rvert \lvert f_{B_{Q^*}}\rvert^p$.

Let us fix  $y_Q\in S$ such that
$\dist(y_Q,Q)=\dist(Q,S)$. 
Denote 
\[
E_Q=Q(y_Q,\ell(Q))\cap S\,,
\] and 
define
\[
g_{Q^*}(y)=\biggl(\int_{Q^*}\frac{\vert f(y)-f(z)\vert ^r}{\vert y-z\vert ^{n+\omega r}}\,dz\biggr)^{1/r}\,.
\]
Recall that
$\mathcal{H}^d$ almost every point $x\in S$ is both
a Lebesgue point of $\bar f$, and satisfies $\bar f(x)=0$.
Thus, by 
the fact that $ E_Q\subset Q^*$ and
Lemma \ref{integral_representation} applied
with $\bar f$,
\begin{align*}
\lvert f_{B_{Q^*}}\rvert\cdot \mathcal{H}^d(E_Q) &=\int_{E_Q} |\bar f(x)-\bar f_{B_{Q^*}}\rvert\,d\mathcal{H}^d(x)\\
&\lesssim \int_{Q^*} 
\int_{E_Q} \frac{d\mathcal{H}^d(x)}{\lvert x-y\rvert^{n-\omega}}\,
g_{Q^*}(y) \,dy\,.
\end{align*}
Observe that
$Q^*\subset Q(y_Q,t)$ and
$E_Q\subset Q(y,t)$
for every $y\in Q^*$, where $t=2\diam(Q^*)$.
Thus, by H\"older's inequality, followed by
Lemma \ref{riesz_est} with $x=y_Q$, we obtain
\begin{align*}
\lvert f_{B_{Q^*}}\rvert\mathcal{H}^d(E_Q)
&\lesssim \lVert g_{Q^*}\chi_{Q^*}\rVert_{r}
\cdot t^{d+\omega-n/r}\,.
\end{align*}
Since $t^d\lesssim \mathcal{H}^d(E_Q)$, the upper bound in \eqref{upper_bound} for 
$\lvert Q\rvert \lvert f_{B_{Q^*}}\rvert^p$ follows.
\end{proof}

\subsection{The general case}\label{hardy_general_sect}
The following theorem is our main result.

\begin{theorem}\label{hardy_general}
Let $S$ be a $d$-set in $\R^n$, $n-1<d<n$.
Suppose that  $1<p<\infty$, $1\le q\le \infty$, and
$s>(n-d)/p$. 
Let $f\in F^{s}_{pq}(\R^n)$
be such
that $\mathrm{Tr}_{S} f = 0$ pointwise
$\mathcal{H}^d$ almost everywhere.
Then $f$ satisfies the Hardy type inequality,
\[
\bigg(\int_{\R^n} \frac{\lvert f(x)\rvert^p}{\dist(x,S)^{sp}}dx\bigg)^{1/p}
\le c\lVert  f \rVert_{F^s_{pq}(\R^n)}\,.
\]
The constant $c>0$ depends on $n$, $d$, $p$, $s$, and $S$.
\end{theorem}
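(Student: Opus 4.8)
The plan is to follow the scheme of the restriction theorem \cite[Theorem~4.8]{ihnatsyeva}, modified so that the r\^ole of ``the trace value at a nearby point of $S$'' is played by $0$. Decomposing $\R^n\setminus S=\bigcup_{Q\in\mathcal{W}_S}Q$ by the Whitney decomposition of \S\ref{porous_sect} and using \eqref{dist_est}, one reduces the integral $\int_{\R^n}\dist(x,S)^{-sp}\lvert f(x)\rvert^p\,dx$, up to an additive multiple of $\lVert f\rVert_{L^p(\R^n)}^p$ (coming from the finitely many scales of cubes with $\ell(Q)>1$), to $\sum_{Q\in\mathcal{W}_S,\,\ell(Q)\le 1}\diam(Q)^{-sp}\int_Q\lvert f(x)\rvert^p\,dx$. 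Moreover, when $q\le p$ one may replace $F^s_{pq}(\R^n)$ by the larger space $F^s_{pp}(\R^n)$ (\cite[pp.~6--7]{T2}); hence it suffices to treat the cases $q\ge p$, and in all of them the local polynomial approximation description of $\lVert f\rVert_{F^s_{pq}(\R^n)}$ from \S\ref{spaces} applies with $u=\min\{p,q\}$ (so that $u=p$ throughout).

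Fix $Q\in\mathcal{W}_S$ with $\ell(Q)\le 1$, write $2^{-j_0}$ for a fixed multiple of $\diam(Q)$, and use \eqref{dist_est} together with $\kappa$-porosity (Lemma~\ref{cop}) to choose a point $w_Q\in S$ and a cube $a(Q):=Q(w_Q,c_02^{-j_0})$ centred in $S$, with $Q\subset a(Q)$ and $\ell(a(Q))$ comparable to $\ell(Q)$. For a Lebesgue point $x\in Q$, telescoping near-best polynomials of degree at most $k-1$ (with $s<k$) along the cubes $Q(x,c_02^{-m})$, $m\ge j_0$, and comparing the endpoint polynomial with a near-best polynomial $P_{a(Q)}$ for $f$ on $a(Q)$ (using the monotonicity \eqref{eqMonotonyOfLocalApproxSset} and the equivalence of $L^\infty$- and $L^u$-norms on polynomials, which absorbs the comparison errors into the sum) yields a pointwise estimate of the form
\[
\lvert f(x)\rvert\lesssim\sum_{m\ge j_0}\mathcal{E}_k\bigl(f,Q(x,c_02^{-m})\bigr)_{L^u(\R^n)}+\lvert P_{a(Q)}(x)\rvert .
\]
Multiplying the first sum by $\diam(Q)^{-s}$ and applying H\"older's inequality in the scale index $m$ together with \eqref{eqMonotonyOfLocalApproxSset}, it is dominated by the function $g$ of \S\ref{spaces} evaluated at $x$ (this step uses only $s>0$ and works for every $q\in[1,\infty]$), so the corresponding part of the sum over $Q$ is at most $c\lVert g\rVert_{L^p(\R^n)}^p\le c\lVert f\rVert_{F^s_{pq}(\R^n)}^p$. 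For the polynomial part, $Q$ and $a(Q)$ being comparable gives $\int_Q\lvert P_{a(Q)}\rvert^p\,dx\lesssim\lvert a(Q)\rvert\cdot\frac{1}{\lvert a(Q)\rvert}\int_{a(Q)}\lvert P_{a(Q)}\rvert^p\,dx$, and, since $a(Q)$ is centred in $S$, Theorem~\ref{qremez} together with the hypothesis $\mathrm{Tr}_S f=0$ $\mathcal{H}^d$-a.e.\ bounds this by a multiple of $\mathcal{H}^d(a(Q)\cap S)^{1-p/u}\bigl(\int_{a(Q)\cap S}\lvert P_{a(Q)}-\bar f\rvert^u\,d\mathcal{H}^d\bigr)^{p/u}$; telescoping again, now centred at $z\in a(Q)\cap S$ and using $\bar f(z)=0$, controls $\lvert P_{a(Q)}(z)-\bar f(z)\rvert$ by $\sum_{m\ge j_0}\mathcal{E}_k(f,Q(z,c_02^{-m}))_{L^u(\R^n)}$ for $\mathcal{H}^d$-a.e.\ $z$.

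It remains to sum the polynomial part over $Q\in\mathcal{W}_S$, which is the technical core. One regroups the cubes $a(Q)$, and the dyadic cubes produced by the telescoping above, into the families $\mathcal{C}_{S,\gamma}$ of \eqref{c_definition} of dyadic cubes relatively close to $S$; the reverse H\"older inequality of Theorem~\ref{reverse} is then used to ``vectorise'' the resulting sums over such cubes, converting an $\ell^1$-type sum in the cubes into an $\ell^q$-type sum matching the $F^s_{pq}$-norm (this is what makes the range $q>p$ accessible and what controls the accumulation of cubes near $S$), Lemma~\ref{LemmaHardyInequality} carries out the summation in the dyadic scale index, and the $d$-regularity of $S$---via the layer-cake estimate and the energy estimate of Lemma~\ref{riesz_est} (and \eqref{assouad})---transfers the integrals over $S$ against $\mathcal{H}^d|_S$ into integrals over $\R^n$ against Lebesgue measure. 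It is exactly here that the hypotheses $n-1<d<n$ and $sp>n-d$ enter decisively, ensuring convergence of the relevant geometric series and of the Riesz-type integral of Lemma~\ref{riesz_est}. Assembling the two contributions gives $\int_{\R^n}\dist(x,S)^{-sp}\lvert f(x)\rvert^p\,dx\lesssim\lVert f\rVert_{L^p(\R^n)}^p+\lVert g\rVert_{L^p(\R^n)}^p\lesssim\lVert f\rVert_{F^s_{pq}(\R^n)}^p$.

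The main obstacle is this final summation: one must organise the double sum over Whitney cubes and dyadic scales so that Theorem~\ref{reverse} and Lemma~\ref{LemmaHardyInequality} apply with the correct exponents simultaneously for all admissible $q$, and carry out the passage between $\mathcal{H}^d|_S$ and Lebesgue measure without losing the sharp threshold $s=(n-d)/p$. By contrast, the Whitney reduction, the telescoping bound for $\lvert f(x)\rvert$, and the use of Theorem~\ref{qremez} with the vanishing trace are routine once this bookkeeping is in place.
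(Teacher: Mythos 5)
Your first half reproduces the paper's strategy correctly: the Whitney reduction, the telescoping estimate $\lvert f(x)\rvert\lesssim\sum_{m}\mathcal{E}_k(f,Q(x,c_02^{-m}))+\lvert P_{a(Q)}(x)\rvert$ at Lebesgue points, and the use of the Remez inequality (Theorem \ref{qremez}) together with $\mathrm{Tr}_S f=0$ to pass the polynomial term to an average of $\lvert P_{a(Q)}-\bar f\rvert$ over $a(Q)\cap S$ are all sound and essentially identical to what the paper does. The genuine gap is exactly the step you yourself label the ``technical core'': the summation of the polynomial contribution over Whitney cubes and the transfer from $\mathcal{H}^d|_S$ to Lebesgue measure is never carried out, and the tools you nominate for it are the wrong ones. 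Theorem \ref{reverse} plays no role in the paper's proof of Theorem \ref{hardy_general} (it is used for Proposition \ref{ext_hardy}), and your premise that one must ``vectorise'' $\ell^1$-sums into $\ell^q$-sums to make $q>p$ accessible is a misdiagnosis: the left-hand side of the Hardy inequality carries no $q$, so one simply uses the bounded embedding $F^s_{pq}(\R^n)\subset F^s_{p\infty}(\R^n)$ and proves the estimate against $\lVert f\rVert_{L^p}+\lVert f^\sharp_{s,p}\rVert_{L^p}$, which handles every $q\in[1,\infty]$ at once (your own pointwise bound of the telescoped sum already reduces to the sup-type quantity $f^\sharp_{s,p}$, so nothing $q$-dependent needs to be preserved). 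Likewise Lemma \ref{riesz_est} is tied to the kernel representation of Lemma \ref{integral_representation}, i.e. to the Gagliardo seminorm of the model case $F^s_{pp}\simeq W^{s,p}$ in \S\ref{hardy_special}; there is no analogous Riesz-potential representation for the local approximation functional $\mathcal{E}_k$, and you do not indicate how that energy estimate would be applied in the general setting.

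What is actually needed in place of this, and what the paper supplies, is the following chain: average the telescoped estimate over $32a(Q)\cap S$ and then over $S$ with the bounded overlap of the cubes $32a(Q)$ and the lower mass bound \eqref{ii)} (Lemma \ref{EqLocalApInLpSViaLocalApInLp}); convert each $\lVert\mathcal{E}_k(f,Q(\cdot,2^{-j}))\rVert_{L^p(S)}$ into $2^{-j(s-(n-d)/p)}\bigl(\int_{\cup\{Q\in\mathcal{W}_S:\ell(Q)\overset{\kappa}{\sim}2^{-j-4}\}}(f^\sharp_{s,p})^p\,dx\bigr)^{1/p}$ by a $5r$-covering of $S$ at scale $2^{-j}$ combined with porosity via Lemma \ref{cop}, which places a Whitney cube of comparable side inside each covering cube and produces the factor $2^{j(n-d)}$ (Lemma \ref{subestTL}); apply Lemma \ref{LemmaHardyInequality} with $\sigma=sp-(n-d)>0$, which is where the hypothesis $s>(n-d)/p$ enters; and finally control $\sum_i\int_{U_i}(f^\sharp_{s,p})^p$ by $\lVert f^\sharp_{s,p}\rVert_p^p$ through a bounded-overlap argument, grouping the scales $i$ into residue classes mod $k_0$ so that within each class the unions $U_i$ of Whitney cubes are pairwise disjoint. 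None of these mechanisms (covering plus porosity at each scale, the resulting factor $2^{j(n-d)}$, and the congruence-class disjointness across scales) appears in your plan, so as written the proposal does not establish the inequality, nor does it show where the sharp threshold is actually used beyond naming the discrete Hardy lemma.
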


We will need several preparations.

Let $Q_x^j$ denote a cube $Q(x,2^{-j})$ with $x\in S$ and $j\in\Z$. Let 
$P_{Q_x^j}$
 be a
projection from $L^p(Q_x^j)$ to $\mathcal{P}_{k-1}$ 
with $k=[s]+1$
such that $\mathcal{E}_k(f,Q_x^j)_{L^p(\R^n)}$
is equivalent to
\[\bigg(\fint_{Q_x^j}\lvert f-P_{Q_x^j}f\rvert^p\,dy\bigg)^{1/p}\,.\]
For the construction of these projections, we refer to
\cite[Proposition 3.4]{Shvartsman} and \cite{DeVoreSharpley}.
We use the following properties of these polynomial projections:
\begin{itemize}
\item[i)]If $Q'\subset Q$ are cubes as above and $\lvert Q'\rvert \geq c\lvert Q\rvert$, then for every $z\in{Q'}$
\[\lvert P_Qf(z)-P_{Q'}f(z)\rvert \le C(c,n,k) \fint_{Q}\lvert f-P_Q f\rvert\,dy;\]
\item[ii)]$\lim_{j\to \infty}P_{Q_z^j}f(z)=f(z)$ at every Lebesgue point $z\in S$  of $f$.
\end{itemize}

Let $\mathcal{W}_S$ denote the Whitney
decomposition of $\R^n\setminus S$.

To every  $Q=Q(x_Q,r_Q)$ in $\mathcal{W}_S$, assign the cube
$
a(Q):=Q(a_Q,r_Q/2),
$
where $a_Q\in S$ is such that
$\|x_Q-a_Q\|_\infty =\mathrm{dist}(x_Q,S)$. 
Then 
\begin{equation}\label{ii)}
\mathcal{H}^d(a(Q)\cap S)\geq 
c\, r_Q^d,\qquad \text{if }\diam(Q)\le 1\,.
\end{equation}
This follows from Definition \ref{sset_def}. 
The constant $c>0$ depends on  $S$.

\begin{lemma}\label{EqLocalApInLpSViaLocalApInLp}
Let $f\in L^p(\R^n)$, $1< p<\infty$, be a function for which
$\mathcal{H}^d$-almost every point in a given $d$-set $S$ is a Lebesgue point,
$n-1<d<n$.
Then, for every $i\in \N$, we have
\begin{equation}\label{EqLocalApInLpSViaLocalApInLp_est}
\begin{split}
&\bigg\{\sum_{Q\in\mathcal{W}_{S}\cap \mathcal{D}_{i+5}} \fint_{32a(Q)\cap S}
\lvert f- P_{32a(Q)} f\rvert^p\,d\mathcal{H}^d\bigg\}^{1/p}\\
&\lesssim 2^{id/p}
\sum_{j=i}^\infty\Vert\mathcal{E}_k(f,Q(\cdot,2^{-j}))_{L^p(\R^n)}\Vert_{L^p(S)}. 
\end{split}
\end{equation}
\end{lemma}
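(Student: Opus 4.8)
The plan is to estimate the left-hand side of \eqref{EqLocalApInLpSViaLocalApInLp_est} by passing from the best approximation on the set $32a(Q)\cap S$ (with respect to $\mathcal{H}^d$) to the best approximation on a Euclidean cube (with respect to Lebesgue measure) via the Remez-type inequality, Theorem~\ref{qremez}, and then controlling the resulting $L^p(\R^n)$ local approximations through the telescoping chain of polynomial projectors $P_{Q_x^j}$. First I would fix $Q\in\mathcal{W}_S\cap\mathcal{D}_{i+5}$ and observe that, since $\diam(Q)\sim 2^{-(i+5)}$ and $a(Q)$ is a cube of comparable size centred at a point $a_Q\in S$, the cube $32a(Q)$ is centred in $S$ and has side length comparable to $2^{-i}$; in particular $32a(Q)\subset Q(a_Q,C2^{-i})$ for a fixed dimensional $C$, and $\mathcal{H}^d\big(32a(Q)\cap S\big)\sim 2^{-id}$ by Definition~\ref{sset_def} (cf.\ \eqref{ii)} and \eqref{sset_rem}). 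Writing $P=P_{32a(Q)}f$, the integrand $f-P$ on $32a(Q)\cap S$ should be compared with $f-P_{Q_{a_Q}^{i}}f$ (the projector attached to the ambient cube $Q(a_Q,2^{-i})\supset 32a(Q)$, or a fixed dilate of it): the difference $P-P_{Q_{a_Q}^{i}}f$ is a polynomial of degree $\le k-1$, so by Theorem~\ref{qremez} its $L^p(\mathcal{H}^d\llcorner S)$-average over $32a(Q)\cap S$ is dominated by its Lebesgue $L^p$-average over $32a(Q)$, hence — using the defining property of $P$ and property (i) of the projectors — by $\mathcal{E}_k(f,Q_{a_Q}^{i})_{L^p(\R^n)}$ up to constants.

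Next I would sum over $Q\in\mathcal{W}_S\cap\mathcal{D}_{i+5}$. The key geometric point is bounded overlap: the cubes $32a(Q)$, $Q\in\mathcal{W}_S\cap\mathcal{D}_{i+5}$, all have comparable size $\sim 2^{-i}$, and since the Whitney cubes $Q$ are essentially disjoint and lie at distance $\sim 2^{-i}$ from $S$, only boundedly many $32a(Q)$ can contain any given point; moreover each $a_Q$ lies in $S$ and the corresponding ambient cubes $Q_{a_Q}^i$ have bounded overlap as well. Thus
\[
\sum_{Q\in\mathcal{W}_{S}\cap \mathcal{D}_{i+5}} \fint_{32a(Q)\cap S}\lvert f- P_{32a(Q)} f\rvert^p\,d\mathcal{H}^d
\lesssim \sum_{Q\in\mathcal{W}_{S}\cap \mathcal{D}_{i+5}} \mathcal{E}_k(f,Q_{a_Q}^{i})_{L^p(\R^n)}^p\,,
\]
and by bounded overlap together with \eqref{sset_rem} this last sum is comparable to $2^{id}\int_S \mathcal{E}_k(f,Q(x,2^{-i}))_{L^p(\R^n)}^p\,d\mathcal{H}^d(x) = 2^{id}\Vert\mathcal{E}_k(f,Q(\cdot,2^{-i}))_{L^p(\R^n)}\Vert_{L^p(S)}^p$. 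Taking $p$-th roots gives the bound with only the $j=i$ term on the right; since the right-hand side of \eqref{EqLocalApInLpSViaLocalApInLp_est} is a sum of nonnegative terms over $j\ge i$, this in particular implies the claimed inequality. (If instead one prefers the telescoping route — writing $P_{Q_{a_Q}^i}f = f$ is false, but $P_{Q_{a_Q}^i}f - P_{Q_{a_Q}^\infty}f = \sum_{j\ge i}(P_{Q_{a_Q}^j}f - P_{Q_{a_Q}^{j+1}}f)$ converges to $f(a_Q)=0$ at a.e.\ $a_Q\in S$ by (ii), and each increment is controlled by $\mathcal{E}_k(f,Q_{a_Q}^j)_{L^p(\R^n)}$ via (i) — one reproduces exactly the full sum $\sum_{j\ge i}$, which is why the statement is phrased that way; I would carry this out to keep the argument parallel with the restriction-theorem proof of \cite{ihnatsyeva}.)

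The step I expect to be the main obstacle is the passage from $\mathcal{H}^d\llcorner S$-averages to Lebesgue averages with uniform constants: one must check that the hypotheses of Theorem~\ref{qremez} are met with parameters independent of $Q$ and $i$ — i.e.\ that the inner cube $32a(Q)$ (or a suitable sub-cube centred in $S$) and the outer cube on which $f-P_{Q_{a_Q}^i}f$ is small stand in a fixed ratio $R$ — and that property (i) is applied only to pairs of cubes whose measures are comparable with a fixed constant, which forces some care in choosing the right dilation factors ($32$, $12$, $5\kappa$, etc.) so that all the nested inclusions $a(Q)\subset 32a(Q)\subset C\,Q(a_Q,2^{-i})$ hold simultaneously. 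The bounded-overlap counting and the equivalence $\sum_Q(\cdots)\sim 2^{id}\int_S(\cdots)\,d\mathcal{H}^d$ are then routine consequences of $d$-regularity \eqref{sset_rem} and the Whitney property \eqref{dist_est}.
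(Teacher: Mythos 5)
Your main route contains a genuine gap: you claim that after removing the polynomial discrepancy $P_{32a(Q)}f-P_{Q_{a_Q}^{i}}f$, the quantity $\fint_{32a(Q)\cap S}\lvert f-P_{32a(Q)}f\rvert^p\,d\mathcal{H}^d$ is controlled by $\mathcal{E}_k(f,Q_{a_Q}^{i})_{L^p(\R^n)}^p$ alone, so that only the $j=i$ term of the right-hand side is needed. This single-scale bound is false. The left-hand side sees the trace of $f$ on the Lebesgue-null set $S$ (through $\mathcal{H}^d$), while $\mathcal{E}_k(f,Q(\cdot,2^{-i}))_{L^p(\R^n)}$ only sees Lebesgue averages at the fixed scale $2^{-i}$. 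Take $f=f_\epsilon$ continuous with $0\le f_\epsilon\le 1$, equal to $1$ on an $\epsilon/2$-neighbourhood of $S$ and supported in an $\epsilon$-neighbourhood: as $\epsilon\to 0$, every $\mathcal{E}_k(f_\epsilon,Q(x,2^{-i}))_{L^p(\R^n)}$ and every $P_{32a(Q)}f_\epsilon$ tends to $0$, yet $\fint_{32a(Q)\cap S}\lvert f_\epsilon-P_{32a(Q)}f_\epsilon\rvert^p\,d\mathcal{H}^d\to 1$. This is precisely why the lemma's right-hand side must sum over all finer scales $j\ge i$: passing from Lebesgue averages to values on $S$ requires the telescoping $f(z)=\lim_j P_{Q_z^j}f(z)$ at Lebesgue points (property ii)), with each increment $P_{Q_z^j}f(z)-P_{Q_z^{j+1}}f(z)$ controlled by $\mathcal{E}_k(f,Q_z^j)_{L^p(\R^n)}$ via property i), carried out at $\mathcal{H}^d$-a.e.\ $z\in Q_x^{i+1}\cap S$ and then averaged in $z$; this is the paper's argument, followed by bounded overlap, $d$-regularity and Fubini to produce the factor $2^{id/p}$ and the $L^p(S)$ norms.

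Your parenthetical sketch of the telescoping route is the right idea but is also misstated: property ii) gives convergence of $P_{Q_z^j}f(z)$ to $f(z)$, not to $0$ (no zero-trace hypothesis appears in this lemma --- that assumption enters only later, in the proof of Theorem \ref{hardy_general}), and telescoping only at the single point $a_Q$ does not control the integral over $32a(Q)\cap S$; you must telescope at a.e.\ $z$ in that set and average with respect to $\mathcal{H}^d$. A minor further point: Theorem \ref{qremez} bounds Lebesgue averages of polynomials by $\mathcal{H}^d$-averages, i.e.\ the opposite direction to the one you invoke; the direction you need (an $\mathcal{H}^d$-average of a fixed-degree polynomial dominated by its Lebesgue average on a comparable cube) is true, but it follows from the sup-norm/$L^1$-norm equivalence on $\mathcal{P}_{k-1}$ (essentially property i)), not from the Remez-type theorem.
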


\begin{proof}
We claim that, for every
$x\in 32 a(Q)\cap S$, where $Q\in\mathcal{W}_S\cap \mathcal{D}_{i+5}$,
\begin{equation}\label{lambda_claim}
\begin{split}
\lambda_Q:&=\bigg\{\fint_{32a(Q)\cap S}
\lvert f- P_{32a(Q)} f\rvert^p\,d\mathcal{H}^d\bigg\}^{1/p}\\
&\lesssim \mathcal{E}_k(f,Q_x^i)_{L^p(\R^n)}+\sum_{j=i+1}^\infty\bigg(\fint_{Q_x^{i+1}\cap S}\mathcal{E}_k(f,Q_z^j)^p_{L^p(\R^n)}\,d\mathcal{H}^d(z)\bigg)^{1/p}\,.
\end{split}
\end{equation}
In order to prove \eqref{lambda_claim}, we first estimate
\begin{equation}\label{first_second}
\lambda_Q^p\lesssim \fint_{Q_x^{i+1}\cap S} 
\lvert f-P_{Q_x^i}f\rvert^p \,d\mathcal{H}^d + \fint_{32a(Q)\cap S} \lvert P_{Q_x^i} f - 
P_{32a(Q)}f\rvert^p\,d\mathcal{H}^d\,,
\end{equation}
where we used the first part of inclusion $32a(Q)\subset Q_x^{i+1}\subset Q_x^i$. These
inclusions
and property i) allow us to bound the second term
on the right hand side by a constant multiple of
$\mathcal{E}_k(f,Q_x^i)_{L^p(\R^n)}^p$.

Consider the first term on the right hand side of \eqref{first_second},
and let $z\in Q_x^{i+1}\cap S$ be a Lebesgue point of $f$. Then, by property ii),
\[
\lvert f(z)-P_{Q_x^i}f(z)\rvert\le |P_{Q_x^i}f(z)-P_{Q_z^{i+1}}f(z)\rvert+\sum_{j=i+1}^\infty \lvert P_{Q_z^{j}}f(z)-P_{Q_z^{j+1}}f(z)\rvert\,.
\]
Since $z\in Q_z^{i+1}\subset Q_x^i$, the property i) and H\"{o}lder's inequality give
\[
\lvert P_{Q_x^i}f(z)-P_{Q_z^{i+1}}f(z)\rvert \lesssim \bigg(\fint_{Q_x^{i}}
\lvert f-P_{Q_x^i}f\rvert^p\,dy\bigg)^{1/p}\lesssim \mathcal{E}_k(f,Q_x^i)_{L^p(\R^n)}\,.
\]
Similarly
$\lvert P_{Q_z^{j}}f(z)-P_{Q_z^{j+1}}f(z)\rvert \lesssim \mathcal{E}_k(f,Q_z^j)_{L^p(\R^n)}$ for $j\in \{i+1,\ldots\}$.
We have shown that \[
\lvert f(z)- P_{Q_x^i}f(z)\rvert \lesssim \mathcal{E}_k(f,Q_x^i)_{L^p(\R^n)}+\sum_{j=i+1}^\infty\mathcal{E}_k(f,Q_z^j)_{L^p(\R^n)}\,
\]
where $z\in Q_x^{i+1}\cap S$ is a Lebesgue point of $f$.
Since $\mathcal{H}^d$-almost every point in $S$ is a Lebesgue point of $f$, we can average the last inequality over the set $Q_x^{i+1}\cap S$.  This
gives
\begin{align*}
&\bigg(\fint_{Q_x^{i+1}\cap
S} \lvert f(z)- P_{Q_x^i}f(z)\rvert^p\,d\mathcal{H}^d(z)\bigg)^{1/p}\\
&\lesssim \mathcal{E}_k(f,Q_x^i)_{L^p(\R^n)}+\sum_{j=i+1}^\infty\bigg(\fint_{Q_x^{i+1}\cap S}\mathcal{E}_k(f,Q_z^j)^p_{L^p(\R^n)}\,d\mathcal{H}^d(z)\bigg)^{1/p}.
\end{align*}
This concludes the proof of \eqref{lambda_claim}.

The family $\{32a(Q)\,:\,Q\in\mathcal{W}_S\cap\mathcal{D}_{i+5}\}$ has a bounded overlapping property. Hence, by \eqref{ii)} and \eqref{lambda_claim},
the left hand side of \eqref{EqLocalApInLpSViaLocalApInLp_est}
can be estimated by a constant multiple of
\begin{align*}
&2^{id/p} \bigg\{ \int_{S}
\sum_{Q\in\mathcal{W}_{S}\cap \mathcal{D}_{i+5}}
\lambda_Q^p\cdot \chi_{32a(Q)}(x) \,d\mathcal{H}^d(x)
\bigg\}^{1/p}\\
&\lesssim
2^{id/p}\bigg(\int_S\mathcal{E}_k(f,Q_x^{i})_{L^p(\R^n)}^p\,d\mathcal{H}^d(x)\bigg)^{1/p}\\&\qquad\qquad+2^{id/p}\sum_{j=i+1}^\infty\bigg(\int_S\fint_{Q_x^{i+1}\cap
S}\mathcal{E}_k(f,Q_z^j)^p_{L^p(\R^n)}\,d\mathcal{H}^d(z)d\mathcal{H}^d(x)\bigg)^{1/p}.
\end{align*}
Using Fubini's theorem and Definition \ref{sset_def}, we get
\begin{align*}
&\int_S\fint_{Q_x^{i+1}\cap
S}\mathcal{E}_k(f,Q_z^j)^p_{L^p(\R^n)}\,d\mathcal{H}^d(z)d\mathcal{H}^d(x)\\&\lesssim 2^{id}\int_S \int_{S} \mathcal{E}_k(f,Q_z^j)^p_{L^p(\R^n)}\chi_{Q_x^{i+1}}(z)\,d\mathcal{H}^d(x)d\mathcal{H}^d(z)\\
&\lesssim  2^{id}\int_{S}\mathcal{H}^d(Q_z^{i+1}\cap S)\mathcal{E}_k(f,Q_z^j)^p_{L^p(\R^n)}d\mathcal{H}^d(z)\\&\lesssim \int_{S}\mathcal{E}_k(f,Q_z^j)^p_{L^p(\R^n)}\,d\mathcal{H}^d(z).
\end{align*}
Collecting the estimates above, we obtain inequality \eqref{EqLocalApInLpSViaLocalApInLp_est}.
\end{proof}

For $f\in L^p_{\rm loc}(\R^n)$ and $s>0$ denote 
\[f^\sharp_{s,p}(x)=\sup_{0<r\le 1} r^{-s}\mathcal{E}_k(f,Q(x,r))_{L^p(\R^n)},\quad x\in\R^n,\,k=[s]+1\,.\]

\begin{lemma}\label{subestTL}
Let $S$ be a $d$-set in $\R^n$, $n-1<d<n$.
Suppose that $1< p<\infty$, $s>0$, $k=[s]+1$ and $f\in L^p_{\rm loc}(\R^n)$. Then, for every $i\in \{2,3,\ldots\}$,
\begin{align*}
\|\mathcal{E}_k(f,Q(\cdot,2^{-i}))_{L^p(\R^n)}\|_{L^p(S)}
\le c2^{-i(s -(n-d)/p)}\bigg(\int_{\cup \{Q\in\mathcal{W}_{S}\,:\, \ell(Q)\overset{\kappa}\sim 2^{-i-4}\}}f^\sharp_{s,p}(x)^pdx\bigg)^{1/p},
\end{align*}
where the constant $c$ depends on $s$, $p$, $n$ and $S$.
\end{lemma}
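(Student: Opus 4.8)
The plan is to localise the local approximation $\mathcal{E}_k(f,Q(x,2^{-i}))_{L^p(\R^n)}$, for a base point $x\in S$, onto a Whitney cube of comparable size lying inside $Q(x,2^{-i})$, then to convert the resulting bound into an integral of $f^\sharp_{s,p}$ over that cube via the monotonicity of local approximation, and finally to sum over $S$, using Ahlfors $d$-regularity to count how many base points can be attached to a single Whitney cube. First I would record that, since $S$ is a $d$-set with $d<n$, it is $\kappa$-porous for some $\kappa\ge 1$ (Remark \ref{porous_obs}). Fixing $x\in S$ and applying Lemma \ref{cop} with the index $i+3$ in place of $i$, I obtain a Whitney cube $Q_x\in\mathcal{W}_S$ with $\ell(Q_x)\overset{\kappa}{\sim}2^{-(i+4)}$ and $Q_x\subset Q(x,2^{-(i+3)})\subset Q(x,2^{-i})$; in particular $\ell(Q_x)$ and $|Q_x|^{1/n}$ are comparable to $2^{-i}$ with constants depending only on $n$ and $S$.

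Next I would transfer the estimate to $f^\sharp_{s,p}$. For any $y\in Q_x$ one has $\|x-y\|_\infty\le 2^{-(i+3)}$, hence $Q(x,2^{-i})\subset Q(y,2^{-i+1})$, and $2^{-i+1}\le 1$ since $i\ge 2$. The monotonicity of local approximation \eqref{eqMonotonyOfLocalApproxSset} applied to this inclusion (the radii are in ratio $2$), together with the definition of $f^\sharp_{s,p}$, gives
\[
\mathcal{E}_k(f,Q(x,2^{-i}))_{L^p(\R^n)}\le 2^{n/p}\,\mathcal{E}_k(f,Q(y,2^{-i+1}))_{L^p(\R^n)}\le 2^{n/p+s}\,2^{-is}\,f^\sharp_{s,p}(y).
\]
Since this holds for every $y\in Q_x$, I would raise it to the power $p$ and average over $Q_x$, using $|Q_x|\gtrsim 2^{-in}$, to obtain
\[
\mathcal{E}_k(f,Q(x,2^{-i}))_{L^p(\R^n)}^p\lesssim 2^{-isp}\fint_{Q_x}f^\sharp_{s,p}(y)^p\,dy\lesssim 2^{-isp}\,2^{in}\int_{Q_x}f^\sharp_{s,p}(y)^p\,dy.
\]

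For the summation step I would assume the right-hand side of the claim is finite, and for each $x\in S$ enlarge the last inequality by replacing $Q_x$ by the union of \emph{all} cubes $Q\in\mathcal{W}_S$ with $\ell(Q)\overset{\kappa}{\sim}2^{-i-4}$ and $Q\subset Q(x,2^{-i})$ (this bypasses any measurable-selection issue, since $Q_x$ is one of them). Integrating over $x\in S$ and interchanging the order of summation and integration reduces matters to bounding, for a fixed such cube $Q$, the $\mathcal{H}^d$-measure of $A_Q:=\{x\in S:\,Q\subset Q(x,2^{-i})\}$. If $A_Q\neq\emptyset$, pick $x_0\in A_Q\subset S$; then for any $x\in A_Q$ and any $z\in Q$ one has $\|x-x_0\|_\infty\le\|x-z\|_\infty+\|z-x_0\|_\infty\le 2^{-i+1}$, so $A_Q\subset Q(x_0,2^{-i+1})\cap S$ and hence $\mathcal{H}^d(A_Q)\lesssim 2^{-id}$ by Definition \ref{sset_def}. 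Combining this with the pairwise disjointness of the interiors of the Whitney cubes yields
\[
\int_S\mathcal{E}_k(f,Q(x,2^{-i}))_{L^p(\R^n)}^p\,d\mathcal{H}^d(x)\lesssim 2^{-isp}\,2^{in}\,2^{-id}\int_{U_i}f^\sharp_{s,p}(y)^p\,dy,
\]
where $U_i=\bigcup\{Q\in\mathcal{W}_S:\ell(Q)\overset{\kappa}{\sim}2^{-i-4}\}$; taking $p$-th roots and noting $(-isp+in-id)/p=-i(s-(n-d)/p)$ gives exactly the asserted inequality, with constant depending only on $n$, $p$, $s$ and $S$.

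I expect the main obstacle to be the bookkeeping in the last step: one must make sure that the Whitney cube produced by Lemma \ref{cop} genuinely has size comparable to $2^{-i}$ (so that the averaging recovers precisely the factor $2^{in}$, up to constants depending on $\kappa$) and that each fixed Whitney cube of that size can be $Q\subset Q(x,2^{-i})$ only for a set of base points $x\in S$ of $\mathcal{H}^d$-measure $\lesssim 2^{-id}$. Everything else is a routine combination of Lemma \ref{cop}, the monotonicity \eqref{eqMonotonyOfLocalApproxSset}, and the definition of a $d$-set; no capacitary or harmonic-analytic input is needed here.
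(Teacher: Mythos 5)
Your argument is correct: the pointwise bound via Lemma \ref{cop} and the monotonicity \eqref{eqMonotonyOfLocalApproxSset} is sound (the hypothesis $i\ge 2$ indeed guarantees $2^{-i+1}\le 1$ both for the definition of $f^\sharp_{s,p}$ and for applying Definition \ref{sset_def} at radius $2^{-i+1}$), and the exponent bookkeeping $(-isp+in-id)/p=-i(s-(n-d)/p)$ comes out right. Your route differs from the paper's in how the integral over $S$ is discretized: the paper first applies the $5r$-covering theorem to produce disjoint cubes $Q_m=Q(x_m,2^{-i-3})$ centered on $S$ with $S\subset\bigcup 5Q_m$, transfers the local approximation from $Q(x,2^{-i})$ to $Q(x_m,2^{-i+1})$, uses $\mathcal{H}^d(5Q_m\cap S)\lesssim 2^{-id}$, and only then invokes porosity (Lemma \ref{cop}) to plant one Whitney cube $R_m\subset Q_m$ per covering cube, on which $f^\sharp_{s,p}$ is averaged; disjointness of the $Q_m$ gives the final union integral. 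You instead attach a Whitney cube $Q_x\subset Q(x,2^{-i})$ directly to every $x\in S$, average $f^\sharp_{s,p}$ over it, and then dualize by Fubini, bounding $\mathcal{H}^d(\{x\in S: Q\subset Q(x,2^{-i})\})\lesssim 2^{-id}$ for each fixed Whitney cube $Q$ of the relevant size. The two proofs rest on the same three ingredients (porosity via Lemma \ref{cop}, monotonicity of local approximation, Ahlfors regularity), but yours dispenses with the covering theorem at the mild cost of the measurability/selection issue for $x\mapsto Q_x$, which you correctly neutralize by enlarging to the union of all admissible Whitney cubes; the paper's covering argument avoids that issue automatically since only countably many centers $x_m$ occur. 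Both yield the constant with the same dependencies.
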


\begin{proof}
Let $\mathcal{F}=\{Q(x,2^{-i-3})\,:\,x\in S\}$.
By the $5r$-covering theorem, see e.g. \cite[p. 23]{Mat95}, there are
disjoint cubes $Q_m=Q(x_m,2^{-i-3})\in \mathcal{F}$,
$m=1,2,\ldots$ (if there are only a finite number of cubes we change the indexing), 
such that $S\subset \bigcup_{m=1}^{\infty} 5Q_m$. 
Hence
\begin{align*}
I:&=\int_{S}\mathcal{E}_k(f,Q(x,2^{-i}))_{L^p(\R^n)}^p\,d\mathcal{H}^d(x)\\&\le \sum_{m=1}^{\infty} \int_{5Q_m\cap S} \mathcal{E}_k(f,Q(x,2^{-i}))_{L^p(\R^n)}^p\,d\mathcal{H}^d(x).
\end{align*}
Notice that, if $x\in 5Q_m=Q(x_m,(5/8)2^{-i})$, then
$Q(x,2^{-i})\subset Q(x_m,2^{-i+1})$.
By \eqref{eqMonotonyOfLocalApproxSset},
\[
\mathcal{E}_k(f,Q(x,2^{-i}))_{L^p(\R^n)}\le c\mathcal{E}_k(f,Q(x_m,2^{-i+1}))_{L^p(\R^n)}\,.
\]
Using the observation above and Definition \ref{sset_def} we can continue as follows:
\begin{equation}\label{EstimateLemsubest}
\begin{split}
I&\le c\sum_{m=1}^{\infty} \mathcal{H}^d(5Q_m\cap S) \mathcal{E}_k(f,Q(x_m,2^{-i+1}))_{L^p(\R^n)}^p\\&\le c2^{-id}\sum_{m=1}^{\infty} \mathcal{E}_k(f,Q(x_m,2^{-i+1}))_{L^p(\R^n)}^p.\\
\end{split}
\end{equation}
By 
Remark \ref{porous_obs} and
Lemma \ref{cop}, for every $Q_m$, there is  $R_m\in\mathcal{W}_{S}$ such that
$\ell(R_m)\overset{\kappa}{\sim} 2^{-i-4}$ and 
$R_m\subset Q_m$. 
If $x\in R_m$, then $Q(x_m,2^{-i+1})\subset Q(x,2^{-i+2})$ and therefore
\begin{equation}\label{pp3}
\mathcal{E}_k(f,Q(x_m,2^{-i+1}))_{L^p(\R^n)}\le c \mathcal{E}_k(f,Q(x,2^{-i+2}))_{L^p(\R^n)}\le c2^{-is}f^\sharp_{s,p}(x).
\end{equation}
Since $\ell(R_m)\overset{\kappa}{\sim} 2^{-i-4}$, we have $|R_m|\ge c2^{-in}$. By using this and \eqref{pp3}, we get
\begin{align*}
I&\le c2^{i(n-d)}\sum_{m=1}^{\infty} |R_m|\mathcal{E}_k(f,Q(x_m,2^{-i+1}))_{L^p(\R^n)}^p\\
&\le c2^{i(n-d-s p)}\sum_{m=1}^{\infty}\int_{R_m}f^\sharp_{s,p}(x)^p\,dx\\
&\le c2^{i(n-d-s p)}\int_{\cup \{Q\in\mathcal{W}_{S}\,:\, \ell(Q)\overset{\kappa}\sim 2^{-i-4}\}}f^\sharp_{s,p}(x)^pdx.
\end{align*}
Taking the $p$'th roots yields the required estimate.
\end{proof}

We are now ready for the proof of Theorem \ref{hardy_general}.

\begin{proof}
Since $F^{s}_{pq}(\R^n)\subset F^{s}_{p\infty}(\R^n)$ boundedly,  
it suffices to verify 
that
\begin{equation}\label{tama}
H:=\bigg(\int_{\R^n} \frac{\lvert f(x)\rvert^p}{\dist(x,S)^{sp}}\,dx\bigg)^{1/p}\lesssim 
\lVert f\rVert_{L^p(\R^n)} + 
\lVert f_{s,p}^\sharp \rVert_{L^p(\R^n)}\,,
\end{equation}
recall the definition of $F^{s}_{p\infty}(\R^n)$ in \S \ref{spaces}.
By properties of Whitney cubes $Q\in\mathcal{W}_{S}$, we can bound
$H^p$ by a constant multiple of
\begin{align*}
\lVert f\rVert_{L^p(\R^n)}^p + \sum _{\substack{Q\in \W_{S}\\\ell(Q)\le 2^{-7}}}
\diam (Q)^{n-sp} \fint_Q \lvert f(x)-P_{32a(Q)}f(x)\rvert^p + \lvert P_{32a(Q)}f(x)\rvert^p\,dx\,.
\end{align*}
By properties of the projection operator and monotonicity 
\eqref{eqMonotonyOfLocalApproxSset}
of local approximations,
\begin{align*}
&\sum _{\substack{Q\in \W_{S}\\\ell(Q)\le 2^{-7}}}
\diam (Q)^{n-sp} \fint_Q \lvert f(x)-P_{32a(Q)} f(x)\rvert^p\,dx\\
&\lesssim \sum _{\substack{Q\in \W_{S}\\\ell(Q)\le 2^{-7}}}
\diam (Q)^{n-sp} \mathcal{E}_k(f,32a(Q))_{L^p(\R^n)}^p\lesssim 
\lVert f^\sharp_{s,p}\rVert^p_{L^p(\R^n)}\,.
\end{align*}
Recall that $Q\subset 32a(Q)$, and their measures are comparable. 
By a Remez-type inequality,
see Theorem \ref{qremez}, and the fact that $\mathcal{H}^d$ 
almost every point $x\in S$ satisfies $\bar f(x)=0$,
\begin{align*}
&\sum _{\substack{Q\in \W_{S}\\\ell(Q)\le 2^{-7}}}
\diam (Q)^{n-sp} \fint_Q \lvert P_{32a(Q)} f(x)\rvert^p\,dx\\
&\lesssim
\sum_{i=2}^\infty   2^{i(sp-n)}\sum _{Q\in \W_{S}\cap \mathcal{D}_{i+5}}
\fint_{32a(Q)\cap S} \lvert \bar f(z) - P_{32a(Q)} \bar{f}(z)\rvert^p\,d\mathcal{H}^d(z)
=:    \Sigma\,.
\end{align*}
Recall that $\mathcal{H}^d$ almost every point is a Lebesgue point of $\bar f$.
Hence, by Lemma \ref{EqLocalApInLpSViaLocalApInLp}, Lemma \ref{subestTL}, 
and Lemma \ref{LemmaHardyInequality} with $\sigma=s p-(n-d)>0$,
\begin{equation}\label{dsn}
\begin{split}
\Sigma &\lesssim \sum_{i=2}^\infty 2^{i(s p-(n-d))}\bigg(\sum_{j=i}^\infty2^{-j(s -(n-d)/p)}\bigg(\int_{\cup \{Q\in\mathcal{W}_{S}\,:\, \ell(Q)\overset{\kappa}\sim 2^{-j-4}\}}f^\sharp_{s,p}(x)^pdx\bigg)^{1/p}\bigg)^p
\\&\lesssim  \sum_{i=2}^\infty\int_{\cup \{Q\in\mathcal{W}_{S}\,:\, \ell(Q)\overset{\kappa}\sim 2^{-i-4}\}}f^\sharp_{s,p}(x)^pdx.
\end{split}
\end{equation}

Let us
denote $U_i:=\cup \{\mathrm{int}\,Q\,:\,Q\in\mathcal{W}_{S}\text{ and }\ell(Q)\overset{\kappa}\sim 2^{-i-4}\}$, and
choose $k_0\in \N$ such that $2^{-k_0}<1/5\kappa$.
Then we claim that
\begin{equation}\label{dis}
U_i\cap U_{i'}=\emptyset,\qquad \text{ if }i\not=i'\text{ and }i,i'\equiv k\,\mathrm{mod}\,k_0.
\end{equation}
To verify this claim, let $i>i'$ be such that $i,i'\equiv k\,\mathrm{mod}\,k_0$. Then $i-i'\ge k_0$.
In particular, if $Q\in \mathcal{W}_{S}$, $\ell(Q)\overset{\kappa}{\sim} 2^{-i-4}$, and 
$Q'\in \mathcal{W}_{S}$, $\ell(Q')\overset{\kappa}{\sim} 2^{-i'-4}$, then
\[
\diam(Q)\le 2^{-i-4}\le 2^{-k_0}2^{-i'-4}<2^{-i'-4}/5\kappa\le \diam(Q').
\]
It follows that $Q\not=Q'$. Since the interiors of Whitney cubes are pairwise disjoint, we find that 
$\mathrm{int}\,Q\cap \mathrm{int}\,Q'=\emptyset$. Hence, \eqref{dis} holds.

From \eqref{dis} it follows that
\begin{equation}\label{dsm}
\begin{split}
\sum_{i=2}^\infty \int_{\cup \{Q\in\mathcal{W}_{S}\,:\, \ell(Q)\overset{\kappa}\sim 2^{-i-4}\}}f^\sharp_{s,p}(x)^pdx
&=\sum_{k=0}^{k_0-1}  \sum_{\substack{i\ge 2\\ i\equiv k\,\mathrm{mod}\,k_0}}\int_{U_i}f^\sharp_{s,p}(x)^pdx
\\&\le k_0\|f_{s,p}^\sharp\|_{p}^p.
\end{split}
\end{equation}
Combining the estimates \eqref{dsn} and \eqref{dsm}, we find that
$\Sigma\le c\|f_{s,p}^\sharp\|_p^p$.
%
\end{proof}

\section{Extension problems}\label{application}

As an application of Hardy type inequality, we study
certain extension problems. 

\subsection{Extension by zero}
First we study the problem of zero extension.
For instance, Corollary \ref{ext_corollary} shows that
the characteristic function
$\chi_\Omega$ 
of a domain whose boundary is a $d$-set, $n-1<d<n$, is a pointwise 
multiplier in the subspace 
$\{f\in F^{s}_{pq}(\R^n)\,:\,\mathrm{Tr}_{\partial\Omega}=0\}$ if $s>(n-d)/p$.


\begin{proposition}\label{ext_hardy}
Let $\Omega$ be a domain in $\R^n$ whose boundary is porous
(in particular, it suffices that $\partial\Omega$ is  a $d$-set with $d<n$).
 Let $f\in F^{s}_{pq}(\R^n)$, $1<p<\infty$, $1\le q< \infty$, $s>0$.
Then 
\begin{equation}
\lVert f\chi_\Omega \rVert_{F^{s}_{pq}(\R^n)}\lesssim \lVert f\rVert_{F^{s}_{pq}(\R^n)}+\bigg(\int_\Omega\frac{|f(y)|^p}{\dist(y,\partial\Omega)^{s p}}\,dy\bigg)^{1/p}.
\end{equation}
The implied constant depends on $p$, $q$, $s$, $n$, $d$, and $\partial\Omega$.
\end{proposition}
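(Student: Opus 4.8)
The plan is to reduce the claim to two local estimates associated with the Whitney decomposition $\mathcal{W}_{\partial\Omega}$ of $\R^n\setminus\partial\Omega$, together with the reverse Hölder inequality for porous sets (Theorem~\ref{reverse}). Write $f\chi_\Omega$ and use the local polynomial characterization of $F^s_{pq}(\R^n)$: it suffices to control $\lVert f\chi_\Omega\rVert_{L^p}$ (which is trivially dominated by $\lVert f\rVert_{L^p}$) and the Lusin-type square function $g$ built from $\mathcal{E}_k(f\chi_\Omega,Q(x,t))_{L^u}$, $k=[s]+1$. For a point $x$ whose distance to $\partial\Omega$ is comparable to or larger than the scale $t$, the cube $Q(x,t)$ either lies entirely in $\Omega$, entirely in $\R^n\setminus\Omega$, or meets $\partial\Omega$; in the first case $f\chi_\Omega=f$ on $Q(x,t)$ and $\mathcal{E}_k(f\chi_\Omega,Q(x,t))=\mathcal{E}_k(f,Q(x,t))$, in the second case the local approximation vanishes, and only the third case — where $x$ is ``close'' to $\partial\Omega$ relative to $t$, i.e.\ the situation encoded by the family $\mathcal{C}_{\partial\Omega,\gamma}$ in \eqref{c_definition} — requires real work.

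First I would split the square function integral at each dyadic scale $t\sim 2^{-j}$ into the ``far'' part and the ``near'' part. For the far part, the estimate $\mathcal{E}_k(f\chi_\Omega,Q(x,t))\le \mathcal{E}_k(f,Q(x,t))$ (valid whenever $Q(x,t)\subset\Omega$, and giving $0$ whenever $Q(x,t)\subset\R^n\setminus\overline\Omega$) shows this contribution is bounded by $\lVert f\rVert_{F^s_{pq}(\R^n)}$ directly. For the near part, when $Q(x,t)$ straddles $\partial\Omega$, choose the competitor polynomial $P\equiv 0$: then
\[
\mathcal{E}_k(f\chi_\Omega,Q(x,t))_{L^u(\R^n)}\le \bigg(\fint_{Q(x,t)}|f(y)\chi_\Omega(y)|^u\,dy\bigg)^{1/u}\le \bigg(\fint_{Q(x,t)}|f(y)|^u\,dy\bigg)^{1/u}.
\]
Since in this regime $\dist(y,\partial\Omega)\lesssim t$ for all $y\in Q(x,t)\cap\Omega$, after multiplying by $t^{-s}$ and taking the $\ell^q(dt/t)$-norm one arrives at an expression dominated (using that the relevant scales $t$ near $x$ are comparable to $\dist(x,\partial\Omega)$) by a sum over Whitney cubes $Q\in\mathcal{W}_{\partial\Omega}$ of the form $\sum_Q \chi_Q(x)\,a_Q$ with $a_Q$ an averaged, distance-weighted $L^p$ quantity of $f$ on a fixed dilate of $Q$. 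The key point is that the outer $L^p(dx)$-norm of such a sum of non-negative dyadic ``close'' terms is controlled, via Theorem~\ref{reverse} applied to the family $\mathcal{C}_{\partial\Omega,\gamma}$, by the $L^p$-norm of the corresponding $\ell^q$-square function, which in turn is bounded by $\big(\int_\Omega |f(y)|^p\dist(y,\partial\Omega)^{-sp}\,dy\big)^{1/p}$.

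The main obstacle I anticipate is the bookkeeping that converts the continuous scale integral $\int_0^1(\cdots)^q\,dt/t$ near a straddling point into a discrete sum over the finitely-overlapping family $\{cQ:Q\in\mathcal{W}_{\partial\Omega}\}$ indexed by $\mathcal{C}_{\partial\Omega,\gamma}$, in a way that is uniform in $q\in[1,\infty)$ and cleanly isolates the distance weight $\dist(y,\partial\Omega)^{-sp}$; this is where the porosity of $\partial\Omega$ (via \eqref{dist_est} and Lemma~\ref{cop}) and the hypothesis $s>0$ enter, and where one must be careful that the competitor $P\equiv 0$ is legitimate and that only scales $t\gtrsim\dist(x,\partial\Omega)$ contribute to the near part, the complementary scales being absorbed into the far part already handled. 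Once the sum is in the form $\sum_{Q\in\mathcal{C}_{\partial\Omega,\gamma}}\chi_Q a_Q$, Theorem~\ref{reverse} finishes the argument, and reassembling the two parts yields the stated inequality.
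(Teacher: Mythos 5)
Your overall strategy is the same as the paper's: pass to the local polynomial approximation norm, split the scales/cubes according to whether the cube meets $\partial\Omega$ (the family $\mathcal{C}_{\partial\Omega,\gamma}$ of \eqref{c_definition}), use the zero polynomial as competitor on straddling cubes, and finish with Theorem \ref{reverse} and a geometric summation using $s>0$. However, two of your steps do not work as stated. First, the claim that the near part is dominated by $\sum_{Q\in\mathcal{W}_{\partial\Omega}}\chi_Q(x)\,a_Q$ with $a_Q$ an averaged quantity of $f$ on a \emph{fixed dilate} of the Whitney cube $Q$ is false, as is the parenthetical that only scales $t$ comparable to $\dist(x,\partial\Omega)$ matter: if $x$ lies in a Whitney cube of side $\delta\ll 1$, then $Q(x,t)$ straddles $\partial\Omega$ for \emph{every} $t\in[\delta,1]$, so the near part at $x$ involves averages of $f$ over cubes of all sizes up to $1$, which cannot be encoded by data on a bounded dilate of that Whitney cube. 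The correct indexing family is the non-disjoint tower family $\mathcal{C}_{\partial\Omega,\gamma}$ (which you do invoke in your last sentence, but it is not the Whitney family); the entire difficulty of the proposition is that each point lies in roughly $\log\big(1/\dist(x,\partial\Omega)\big)$ cubes of this family, so the naive pointwise bound by a maximal function of $|f|\chi_\Omega\dist(\cdot,\partial\Omega)^{-s}$ loses a logarithmic factor and is not sufficient.

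Second, your use of Theorem \ref{reverse} is circular as described: from the square function you pass (by $\ell^1\hookrightarrow\ell^q$) to $\big\|\sum_{Q\in\mathcal{C}_{\partial\Omega,\gamma}}\chi_Q a_Q\big\|_p$, and Theorem \ref{reverse} with exponent $q$ then returns exactly the $L^p$-norm of the $\ell^q$-aggregate you started from; the remaining assertion, that this quantity is bounded by the Hardy integral, is the original problem again and is not a routine computation when $1\le q<p$. The repair, which is what the paper does, is to apply Theorem \ref{reverse} with the exponent $p$ in place of $q$ (the theorem holds for any exponent in $(1,\infty)$, independently of the microscopic parameter of the function space). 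Since cubes of a fixed dyadic generation are essentially disjoint, this gives
\[
\Big\|\sum_{Q\in\mathcal{C}_{\partial\Omega,\gamma}}\chi_Q a_Q\Big\|_{L^p(\R^n)}
\lesssim\Big(\sum_{Q\in\mathcal{C}_{\partial\Omega,\gamma}}|Q|\,a_Q^p\Big)^{1/p},
\]
and with $a_Q\lesssim \ell(Q)^{-s}\big(\fint_{4Q}|f\chi_\Omega|^p\,dy\big)^{1/p}$ one concludes by Fubini together with the bound $\sum_{Q\in\mathcal{C}_{\partial\Omega,\gamma}:\,y\in 4Q}\ell(Q)^{-sp}\lesssim \dist(y,\partial\Omega)^{-sp}$, a geometric series valid because $s>0$ and every such cube has $\ell(Q)\gtrsim\dist(y,\partial\Omega)$. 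With these two repairs your argument becomes the paper's proof; porosity enters only through Theorem \ref{reverse}, not through Lemma \ref{cop} or \eqref{dist_est} as you suggest.
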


\begin{proof}
For convenience,  denote $\tilde f=f\chi_\Omega$.
By \cite[Theorem 2.2.2]{T89} or
\cite[Remark 3.4]{ihnatsyeva},
the norm $\|\tilde{f}\|_{F^{s}_{pq}(\R^n)}$ is equivalent to the quantity
\[
\bigg\|\bigg( \sum_{j=0}^\infty 2^{js q} \mathcal{E}_k(\tilde f,Q(\cdot,2^{-j}))_{L^1(\R^n)}^q\bigg)^{1/q} \bigg\|_{L^p(\R^n)}+\|\tilde f\|_{L^p(\R^n)}\,,\qquad k=[s]+1\,.
\]
The second summand is clearly controlled, and we focus on the first one.
By  monotonicity 
of local approximations  \eqref{eqMonotonyOfLocalApproxSset}, if $j\in\N_0$,
\[
\mathcal{E}_k(\tilde f,Q(x,2^{-j}))_{L^1(\R^n)}
\lesssim \sum_{Q\in\mathcal{D}_j} \chi_Q(x) \mathcal{E}_k(\tilde f,4Q)_{L^1(\R^n)},\quad x\in\R^n.
\]
Next we split the
summation on the right hand side in two parts,
depending on whether or not $Q\in\mathcal{C}:=\mathcal{C}_{\partial\Omega,\gamma}$
with $\gamma=5$, recall definition \eqref{c_definition}.

Observe that $4Q\cap \partial\Omega=\emptyset$ if
$Q\in \mathcal{D}_j\setminus \mathcal{C}$ with $j\ge 0$. Thus, 
for such cubes, we have
either $4Q\subset \Omega$ or $4Q\subset \R^n\setminus \overline{\Omega}$.
In both cases,
\begin{align*}
\mathcal{E}_k(\tilde f,4Q)_{L^1(\R^n)} \lesssim \mathcal{E}_k (f,4Q)_{L^1(\R^n)}\,.
\end{align*}
Hence,
\begin{equation}\label{first_ineq}
\begin{split}
&\bigg\| \bigg( \sum_{j=0}^\infty 2^{js q}
\big(\sum_{Q\in\mathcal{D}_j\setminus \mathcal{C}} \chi_Q\mathcal{E}_k(\tilde f,4Q)_{L^1(\R^n)}
\big)^q
\bigg)^{1/q} \bigg\|_{L^p(\R^n)}
\\&\lesssim
\bigg\| \bigg( \sum_{j=0}^\infty 2^{js q}
\big(\sum_{Q\in\mathcal{D}_j} \chi_Q\mathcal{E}_k(f,4Q)_{L^1(\R^n)}
\big)^q
\bigg)^{1/q} \bigg\|_{L^p(\R^n)} \lesssim \lVert f\rVert_{F^{s}_{pq}(\R^n)}\,.
\end{split}
\end{equation}
The last step follows from monotonicity of local approximations.

In order to estimate the remaining
term, associated with cubes $Q\in\mathcal{C}$, we use 
Theorem \ref{reverse}. Note also that cubes in $\mathcal{D}_j$ have mutually disjoint interiors. Thus, we have
\begin{equation}\label{second_ineq}
\begin{split}
A:&=\bigg\| \bigg( \sum_{j=0}^\infty 2^{js q}
\big(\sum_{Q\in\mathcal{D}_j\cap \mathcal{C}} \chi_Q\mathcal{E}_k(\tilde f,4Q)_{L^1(\R^n)}
\big)^q
\bigg)^{1/q} \bigg\|_{L^p(\R^n)}\\
&\lesssim  \bigg(\int_{\R^n} \sum_{Q\in\mathcal{C}}\chi_Q(x) \ell(Q)^{-s p} \mathcal{E}_k(\tilde f,4Q)_{L^1(\R^n)}^p\,dx\bigg)^{1/p}\,.
\end{split}
\end{equation}
By the following inequality,
\[
\mathcal{E}_k(\tilde f,4Q)_{L^1(\R^n)}^p \lesssim \fint_{4Q} \lvert \tilde f\rvert^p\,dx= 
\frac{1}{\lvert 4Q\rvert} \int_{4Q\cap \Omega} \lvert f\rvert^p\,dx\,,
\]
and definition of family $\mathcal{C}$,
we obtain
\begin{align*}
A^p\lesssim \int_{\Omega} \bigg\{\sum_{Q\in\mathcal{C}} \ell(Q)^{-s p}\chi_{4Q}(y) \bigg\}  \lvert
f(y)\rvert^p\,dy\lesssim \int_\Omega\frac{|f(y)|^p}{\dist(y,\partial\Omega)^{s p}}\,dy\,.
\end{align*}
This completes the proof.
\end{proof}

The following 
is a consequence of Theorem \ref{hardy_general} and Proposition \ref{ext_hardy}.
See also Remark \ref{porous_obs}

\begin{corollary}\label{ext_corollary}
Suppose that $\Omega$ is a domain in $\R^n$ whose
boundary is a $d$-set, $n-1<d<n$.
Suppose also that $1<p<\infty$, $1\le q< \infty$, and
$s>(n-d)/p$. 
Let $f\in F^{s}_{pq}(\R^n)$
be such
that $\mathrm{Tr}_{\partial\Omega} f = 0$ pointwise
$\mathcal{H}^d$ almost everywhere.
Then 
\[
\lVert f\chi_\Omega \rVert_{F^{s}_{pq}(\R^n)}\lesssim \lVert f\rVert_{F^{s}_{pq}(\R^n)}\,,
\]
where the implied constant depends on $p$, $q$, $s$, $n$, $d$, and $\partial\Omega$.
\end{corollary}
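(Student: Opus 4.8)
The plan is to obtain Corollary \ref{ext_corollary} simply by composing the two principal results already established. First I would invoke Remark \ref{porous_obs}: since $\partial\Omega$ is a $d$-set with $d<n$, it is porous, so the hypotheses of Proposition \ref{ext_hardy} are satisfied and it gives
\[
\lVert f\chi_\Omega\rVert_{F^{s}_{pq}(\R^n)} \lesssim \lVert f\rVert_{F^{s}_{pq}(\R^n)} + \bigg(\int_\Omega \frac{\lvert f(y)\rvert^p}{\dist(y,\partial\Omega)^{sp}}\,dy\bigg)^{1/p}.
\]
It then remains only to bound the Hardy term on the right-hand side by $\lVert f\rVert_{F^{s}_{pq}(\R^n)}$.

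For that I would apply Theorem \ref{hardy_general} with $S=\partial\Omega$. The hypotheses match exactly: $\partial\Omega$ is a $d$-set with $n-1<d<n$, we have $1<p<\infty$, $1\le q<\infty$, $s>(n-d)/p$, and $\mathrm{Tr}_{\partial\Omega}f=0$ pointwise $\mathcal{H}^d$-almost everywhere by assumption. Hence
\[
\bigg(\int_{\R^n} \frac{\lvert f(x)\rvert^p}{\dist(x,\partial\Omega)^{sp}}\,dx\bigg)^{1/p} \le c\lVert f\rVert_{F^{s}_{pq}(\R^n)},
\]
and since $\Omega\subset\R^n$ the integral over $\Omega$ is dominated by the integral over $\R^n$. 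Combining the two displays yields the asserted estimate, with implied constant depending only on $p$, $q$, $s$, $n$, $d$, and $\partial\Omega$, as both constituent constants do.

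I do not anticipate any genuine obstacle here: all the substantive work resides in Theorem \ref{hardy_general} and Proposition \ref{ext_hardy}, and the corollary is their direct composition. The only points needing a line of care are consistency of the parameter ranges (Proposition \ref{ext_hardy} is stated for $1\le q<\infty$, $s>0$, and Theorem \ref{hardy_general} for $1\le q\le\infty$, $s>(n-d)/p$, so the corollary's hypotheses lie in both) and consistency of the trace notion (both statements use the same strictly-defined value $\mathrm{Tr}_{\partial\Omega}f=\bar f$), so there is no gap to bridge.
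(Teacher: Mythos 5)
Your proposal is correct and coincides with the paper's own route: the corollary is stated there precisely as a consequence of Theorem \ref{hardy_general} and Proposition \ref{ext_hardy}, with Remark \ref{porous_obs} supplying the porosity of the $d$-set boundary. Your added checks on parameter ranges and the trace notion are fine but routine; there is nothing further to bridge.
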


An application of the corollary is Theorem \ref{intrinsic}.
In order
to formulate this theorem, we  recall some notation which is common in the literature on 
function spaces on domains, \cite{T2,T3}. 

Let $\Omega$ be an open set in $\R^n$, $1\le p<\infty$,  $1\le q\leq\infty$,
and $s>0$. Then
\[
F^s_{pq}(\Omega)=\{f\in L^p(\Omega)\,:\, \text{there is a}\, g\in F^s_{pq}(\R^n)\,\,\text{with}\,g|_\Omega=f\}
\]
\[
\Vert f\Vert_{F^s_{pq}(\Omega)}=\inf\Vert g\Vert_{F^s_{pq}(\R^n)},
\]
where the infimum is taken over all $g\in F^s_{pq}(\R^n)$ such that $g|_\Omega=f$ pointwise a.e. 
As usual, we also denote
\begin{equation}\label{definitionFtilde}
\widetilde{F}^s_{pq}(\Omega)=\{f\in L^p(\Omega):\, \text{there is a}\, g\in F^s_{pq}(\R^n)\,\,\text{with}\,g|_\Omega=f \, \text{and} \, \supp g\subset\overline{\Omega}\}
\end{equation}
\[
\Vert f\Vert_{\widetilde{F}^s_{pq}(\Omega)}=\inf\Vert g\Vert_{F^s_{pq}(\R^n)},
\]
where the infimum is taken over all $g$ admitted in \eqref{definitionFtilde},

Finally, 
$\overset\circ{ F^{s}_{pq}}(\Omega)$ is a completion of $C^\infty_0(\Omega)$ in $F^s_{pq}(\Omega)$.

\begin{theorem}\label{intrinsic}
Let $\Omega$ be a domain 
whose closure $\overline{\Omega}$ is an $n$-set, and whose
boundary $\partial\Omega$ is a $d$-set with $n-1<d<n$. Suppose that
$1<p<\infty$, $1\le q<\infty$ and $s>(n-d)/p$.
Then 
\[
\overset\circ{ F^{s}_{pq}}(\Omega)\subset\widetilde{F}^s_{pq}(\Omega)\,,
\]
and this inclusion is bounded. 
\end{theorem}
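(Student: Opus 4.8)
The plan is to deduce the inclusion from Corollary~\ref{ext_corollary}. The main point is that for $f\in C^{\infty}_0(\Omega)$ one can produce an extension of $f$ to $\R^n$ that has controlled norm \emph{and} vanishing trace on $\partial\Omega$; Corollary~\ref{ext_corollary} then controls $\|f\chi_{\Omega}\|_{F^{s}_{pq}(\R^n)}$, which is essentially the extension of $f$ by zero, and the general case follows by density.

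To build such an extension I would invoke the hypothesis that $\overline{\Omega}$ is an $n$-set: the restriction and extension theorems for $d$-sets of \cite{ihnatsyeva}, applied with $d=n$ to the closed set $\overline{\Omega}$ (since $|\partial\Omega|=0$, the intrinsic space $F^{s}_{pq}(\overline{\Omega})$ coincides with $F^{s}_{pq}(\Omega)$ up to equivalent norms), provide a bounded linear extension operator $E\colon F^{s}_{pq}(\Omega)\to F^{s}_{pq}(\R^n)$ with $(Ef)|_{\Omega}=f$ a.e. Crucially, $E$ is of Whitney type: on $\R^n\setminus\overline{\Omega}$ it equals $\sum_{Q\in\mathcal{W}_{\overline{\Omega}}}\varphi_Q\{\mathrm{Pr}_{k-1,a(Q)}f\}$ with $k=[s]+1$, where $\varphi_Q$ is supported near the Whitney cube $Q$ and $a(Q)\subset\overline{\Omega}$ is a cube at distance $\approx\diam(Q)$ from $Q$; hence the values of $Ef$ near $\partial\Omega$ depend only on those of $f$ near $\partial\Omega$. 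Consequently, if $f\in C^{\infty}_0(\Omega)$, then $f$ vanishes in a neighbourhood of $\partial\Omega$, so $\mathrm{Pr}_{k-1,a(Q)}f=0$ for every Whitney cube $Q$ of small enough diameter, i.e.\ (by \eqref{dist_est}) for every $Q$ close enough to $\partial\Omega$; thus $Ef$ vanishes in a neighbourhood of $\partial\Omega$, and therefore $\mathrm{Tr}_{\partial\Omega}(Ef)=0$ pointwise $\mathcal{H}^d$-almost everywhere. Since $\partial\Omega$ is a $d$-set with $n-1<d<n$ and $s>(n-d)/p$, Corollary~\ref{ext_corollary} applies to $h:=Ef$ and gives $\|h\chi_{\Omega}\|_{F^{s}_{pq}(\R^n)}\lesssim\|h\|_{F^{s}_{pq}(\R^n)}\lesssim\|f\|_{F^{s}_{pq}(\Omega)}$ for every $f\in C^{\infty}_0(\Omega)$.

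For general $f\in\overset\circ{F^{s}_{pq}}(\Omega)$, pick $f_m\in C^{\infty}_0(\Omega)$ with $f_m\to f$ in $F^{s}_{pq}(\Omega)$, and set $g_m=(Ef_m)\chi_{\Omega}$. By the previous estimate applied to $f_m-f_k$ and the linearity of $E$, the sequence $(g_m)$ is Cauchy in $F^{s}_{pq}(\R^n)$; let $g$ be its limit. Since each $g_m$ vanishes on the open set $\R^n\setminus\overline{\Omega}$ and the convergence holds also in $L^p(\R^n)$, we get $g=0$ a.e.\ on $\R^n\setminus\overline{\Omega}$, so $\supp g\subset\overline{\Omega}$; and since $g_m|_{\Omega}=f_m\to f$ in $L^p(\Omega)$, we get $g|_{\Omega}=f$ a.e. Thus $g$ is admissible in \eqref{definitionFtilde}, and $\|f\|_{\widetilde{F}^{s}_{pq}(\Omega)}\le\|g\|_{F^{s}_{pq}(\R^n)}=\lim_m\|g_m\|_{F^{s}_{pq}(\R^n)}\lesssim\lim_m\|f_m\|_{F^{s}_{pq}(\Omega)}=\|f\|_{F^{s}_{pq}(\Omega)}=\|f\|_{\overset\circ{F^{s}_{pq}}(\Omega)}$, which is the asserted bounded inclusion. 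I expect the main obstacle to be securing $\mathrm{Tr}_{\partial\Omega}(Ef)=0$, so that Corollary~\ref{ext_corollary} is applicable: a generic norm-controlled extension of a $C^{\infty}_0(\Omega)$ function need not have zero trace on $\partial\Omega$, since its values in $\R^n\setminus\overline{\Omega}$ also enter the averages defining $\mathrm{Tr}_{\partial\Omega}$, and it is precisely to avoid this that one uses the Whitney-type extension of \cite{ihnatsyeva} — whence the $n$-set hypothesis on $\overline{\Omega}$ — the rest being bookkeeping with the definitions of the spaces involved.
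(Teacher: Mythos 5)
Your overall strategy (zero-extend the approximating $C^\infty_0(\Omega)$ functions via Corollary \ref{ext_corollary}, then pass to the limit) is workable, but the step that carries all the weight is not justified: the existence of a bounded \emph{linear} Whitney-type extension operator $E\colon F^{s}_{pq}(\Omega)\to F^{s}_{pq}(\R^n)$ obtained by ``applying the restriction and extension theorems of \cite{ihnatsyeva} with $d=n$ to $\overline{\Omega}$''. The theorems used in this paper (\cite[Theorems 4.8 and 6.7]{ihnatsyeva}) concern traces on $d$-sets with $d<n$: the trace space there is the Besov space $B^{s-(n-d)/p}_{pp}(S)$ and the proofs rest on porosity, so they say nothing about $d=n$. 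For an $n$-regular set the trace space is of Triebel--Lizorkin type and an intrinsic characterization with a Whitney extension operator is a genuinely different (Shvartsman-type) result, which you would have to prove or cite precisely; within this paper a bounded linear extension operator $F^{s}_{pq}(\Omega)\to F^{s}_{pq}(\R^n)$ for these domains only appears later, as Corollary \ref{cor_ext}, as a consequence of Theorem \ref{main_extension}. So, as written, there is a gap exactly at the construction of $E$.

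Moreover, the obstacle you introduce $E$ to avoid is not actually there: the claim that a generic norm-controlled extension of $f\in C^\infty_0(\Omega)$ need not have zero trace overlooks the $n$-set hypothesis on $\overline{\Omega}$. By Remark \ref{clarify}, at $\mathcal{H}^d$-a.e.\ $x\in\partial\Omega$ the trace of \emph{any} extension $g$ equals the interior trace $\lim_{r\to 0+}\fint_{Q(x,r)\cap\Omega}f\,dy$ (here $n$-regularity gives $\lvert Q(x,r)\cap\Omega\rvert\gtrsim r^n$ and $\lvert\partial\Omega\rvert=0$), hence it vanishes whenever $f$ vanishes near $\partial\Omega$. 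Thus you may simply use near-minimal extensions, which exist by the very definition of $F^{s}_{pq}(\Omega)$: applying Corollary \ref{ext_corollary} to a near-minimal extension of $f_m-f_k$ bounds the zero extension of $f_m-f_k$ (note the zero extension $(\cdot)\chi_\Omega$ is canonical, so you do not need linearity of the chosen extensions — your Cauchy argument as written does lean on the linearity of $E$), and your limiting argument then goes through verbatim. This repaired argument is essentially the paper's proof, which in fact applies Corollary \ref{ext_corollary} only once, to a near-minimal extension $g$ of the limit $f$ itself, after deducing $\mathrm{Tr}_{\partial\Omega}g=0$ from the vanishing traces of $g-G_j$ (extensions of the smooth $f_j$) together with the boundedness of $\mathrm{Tr}_{\partial\Omega}\colon F^{s}_{pq}(\R^n)\to L^p(\partial\Omega)$ from \cite[Theorem 4.8]{ihnatsyeva}.
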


\begin{remark}\label{clarify}
Let us first clarify the role of assumptions in the theorem.
Suppose that $f\in F^{s}_{pq}(\Omega)$ and
$g\in F^{s}_{pq}(\R^n)$ is any extension of $f$.
Recall that $\mathcal{H}^d$ almost
every point in $\partial\Omega$ is a Lebesgue point of
$g$.
By Lebesgue differentiation theorem,
and the assumption that $\overline{\Omega}$ is an $n$-set,
\[
\mathrm{Tr}_{\partial\Omega} g(x)=\lim_{r\to 0+}\fint_{Q(x,r)\cap \Omega} f(y)\,dy
\]
in the Lebesgue points $x\in\partial\Omega$ of $g$. 
Here we also used the fact that $\partial\Omega$ has zero $n$-measure.
To state the conclusion otherwise,
any extension of $f$ has the same trace $\mathcal{H}^d$ a.e. on $\partial\Omega$, and
this trace coincides a.e. with the {\em interior trace} given above.
\end{remark}

\begin{proof}[Proof of Theorem \ref{intrinsic}]
Suppose that $f\in \overset\circ{ F^{s}_{pq}}(\Omega)$, then there is a sequence $f_j\in C^\infty_0(\Omega)$ such that 
$f_j\to f$ in $F^s_{pq}(\Omega)$. By definition, there
are  functions $g$ and
$\{G_j\}_{j\in\N}$ belonging to $F^s_{pq}(\R^n)$ for which
$g\lvert_\Omega = f$
and $G_j\lvert_\Omega = f-f_j$, $j\in\N$. Moreover,
we can suppose that $\lVert g\rVert_{F^s_{pq}(\R^n)}\le 2\lVert f\rVert_{F^s_{pq}(\Omega)}$ and $\lVert G_j\rVert_{F^s_{pq}(\R^n)}\le 2\lVert f-f_j\rVert_{F^s_{pq}(\Omega)}$ for all $j$.

Since $(g-G_j)\lvert_\Omega = f_j$, 
and the trace on the boundary is independent of the extension,
the trace of $g-G_j$ on $\partial\Omega$ vanishes. Thus,
\begin{align*}
\lVert \mathrm{Tr}_{\partial\Omega} g\rVert_{L^p(\partial\Omega)}
&= \lim_{j\to\infty}\lVert \mathrm{Tr}_{\partial\Omega} g-\mathrm{Tr}_{\partial\Omega} (g-G_j)\rVert_{L^p(\partial\Omega)}.
\end{align*}
By linearity and boundedness of the trace operator, \cite[Theorem 4.8]{ihnatsyeva},
\begin{align*}
\lVert \mathrm{Tr}_{\partial\Omega} g\rVert_{L^p(\partial\Omega)}&=\lim_{j\to\infty} \lVert \mathrm{Tr}_{\partial\Omega} G_j\lVert_{L^p(\partial\Omega)} \\
&\lesssim \lim_{j\to\infty}\lVert G_j\rVert_{F^s_{pq}(\R^n)} = 0.
\end{align*}
We have shown that $g$ has zero trace on $\partial\Omega$.
By Corollary \ref{ext_corollary},
\[
\lVert g\chi_\Omega\lVert_{F^s_{pq}(\R^n)} \lesssim \lVert g\rVert_{F^s_{pq}(\R^n)} \le 2 \lVert f\rVert_{F^s_{pq}(\Omega)}\,,
\]
which is a sufficient estimate since $(g\chi_\Omega)\lvert_\Omega= f$, and
the support of $g\chi_\Omega$ is contained in $\overline{\Omega}$.
\end{proof}

\begin{remark}
Theorem \ref{intrinsic}
is related to the following result due to Caetano, 
\cite[Corollary 2.7]{caetano}. Let $\Omega$ be a bounded domain
such that $\partial\Omega$ is a $d$-set for some $d<n$.
Then \begin{equation}\label{test}
\overset\circ{ F^{s}_{pq}}(\Omega) = F^{s}_{pq}(\Omega),\qquad s<(n-d)/p\,.
\end{equation}
This identification fails if $p>1$, $s>(n-d)/p$, and
${\Omega}$ is an $n$-set
whose boundary is a $d$-set with $d<n$,
\cite[Proposition 3.7]{caetano}.
Theorem \ref{intrinsic} gives a partial counterpart 
of identification \eqref{test} in case of $s>(n-d)/p$.
\end{remark}

\subsection{Extension from the boundary trace}
We close this paper by considering an extension of a smooth function $f$
from its trace $\mathrm{Tr}_{\partial\Omega} f$ on the boundary of a given domain $\Omega$.
In the complement of the domain, an extension is defined
in terms of polynomial projections 
of  $\mathrm{Tr}_{\partial\Omega} f$.
Throughout, we assume that the boundary $\partial\Omega$
is a $d$-set with $n-1<d<n$.

To each cube $Q=Q(x_Q,r_Q)\in\mathcal{W}_{\partial\Omega}$ in the Whitney
decomposition of $\R^n\setminus \partial\Omega$,
we assign a nearby cube
$a(Q):=Q(a_Q,r_Q/2)$,
where $a_Q\in \partial\Omega$ is such that
$\|x_Q-a_Q\|_\infty =\mathrm{dist}(x_Q,\partial\Omega)$. 
Let $\{P_\beta\}_{|\beta|\le k}$  be an
orthonormal basis of $\mathcal{P}_{k}$, $k\geq 0$, with
respect to the inner product
\[\langle p,q\rangle= \int_{a(Q)\cap \partial\Omega} pq\, d\mathcal{H}^d,\qquad p,q\in \mathcal{P}_k.\]
Observe that the zero set of $p\in\mathcal{P}_{k}\setminus \{0\}$
has Hausdorff dimension at most $n-1$. Hence, the formula gives
 an inner product.
Define a linear operator $\mathrm{Pr}_{k,a(Q)}:L^1(a(Q)\cap \partial\Omega)\to \mathcal{P}_{k}$ by setting
\begin{equation}\label{projs}
\mathrm{Pr}_{k,a(Q)} f :=  \sum_{|\beta|\le k} \langle f,P_\beta\rangle P_\beta
=\sum_{|\beta|\le k} \bigg(\int_{a(Q)\cap \partial\Omega} fP_\beta\,d\mathcal{H}^d\bigg) P_\beta
\end{equation}
if $\diam(Q)\le \Delta:=16000$,
and $\mathrm{Pr}_{k,a(Q)} f=0$ 
otherwise.

Let $\{\varphi_Q:Q\in \mathcal{W}_{\partial\Omega}\}$ be a smooth partition of unity,
subordinate to the Whitney decomposition $\mathcal{W}_{\partial\Omega}$.
Then, in particular, 
\[
\chi_{\R^n\setminus {\partial\Omega}}= \sum_{Q\in\mathcal{W}_{\partial\Omega}} \varphi_Q\,\quad \text{and }\quad \mathrm{supp}\,\varphi_Q\subset (9/8)Q\,, \text{ if } Q\in\mathcal{W}_{\partial\Omega}.
\]
For $f\in F^{s}_{pq}(\R^n)$ with $s>(n-d)/p$
and $k=[s]+1$, we define \begin{equation}\label{DefExtensionOperator_I}
{\rm Ext}_{k,\Omega}f(x):=\begin{cases}
f(x), \quad &\text{if }x\in \Omega;\\
\sum_{Q\in \mathcal{W}_{\partial\Omega}}\varphi_Q(x)\{\mathrm{Pr}_{k-1,a(Q)}\circ \mathrm{Tr}_{{\partial\Omega}}(f)\}(x), \quad &\text{if }x\in\mathbb{R}^n\setminus \Omega.
\end{cases}
\end{equation}
Observe that  \eqref{DefExtensionOperator_I}
induces a linear operator ${\rm Ext}_{k,\Omega}$. 
We emphasise
that the values of $\mathrm{Ext}_{k,\Omega}f$ outside of
$\Omega$ 
depend only on the trace of $f$ on the boundary---loosely speaking, we
are extending from the boundary trace.


\begin{theorem}\label{main_extension}
Suppose that $\Omega$ is a domain whose
boundary is a $d$-set, $n-1<d<n$. Suppose that 
$1<p<\infty$, $1\le q< \infty$, $s>(n-d)/p$, and $k=[s]+1$. Then
\[
\mathrm{Ext_{k,\Omega}} \in \mathscr{L}(F^s_{pq}(\R^n))\,.
\]
That is, the extension operator is a bounded linear operator
on $F^s_{pq}(\R^n)$, and the operator
norm depends on $p$, $q$, $s$, $n$, $d$, and $\partial\Omega$.
\end{theorem}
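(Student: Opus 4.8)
The plan is to split $\mathrm{Ext}_{k,\Omega}f = f\chi_\Omega + R(f)$, where $R(f) := \chi_{\R^n\setminus\Omega}\sum_{Q\in\mathcal{W}_{\partial\Omega}}\varphi_Q\{\mathrm{Pr}_{k-1,a(Q)}\circ\mathrm{Tr}_{\partial\Omega}(f)\}$ is the exterior part, and to bound each summand in $F^s_{pq}(\R^n)$ by $\lVert f\rVert_{F^s_{pq}(\R^n)}$. The point of this decomposition is that it reduces matters to tools already assembled in the paper: a trace/restriction estimate, an extension estimate, a Hardy inequality, and a multiplier result. First I would record that, by \cite[Theorem 4.8]{ihnatsyeva}, the trace operator $\mathrm{Tr}_{\partial\Omega}$ is bounded from $F^s_{pq}(\R^n)$ into the relevant Besov space $B^{s-(n-d)/p}_{pp}(\partial\Omega)$ on the $d$-set $\partial\Omega$, described via local polynomial approximations on $\partial\Omega$; and that there is a bounded extension operator $\mathrm{Ext}$ going the other way, also from \cite{ihnatsyeva}. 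Write $\psi:=\mathrm{Tr}_{\partial\Omega}f$, so $\lVert\psi\rVert_{B^{s-(n-d)/p}_{pp}(\partial\Omega)}\lesssim\lVert f\rVert_{F^s_{pq}(\R^n)}$.

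Next, the key claim is that $R(f)$ agrees, outside $\partial\Omega$, with a particular realization of the extension operator from the trace space, so that $\lVert R(f)\rVert_{F^s_{pq}(\R^n)}\lesssim\lVert\psi\rVert_{B^{s-(n-d)/p}_{pp}(\partial\Omega)}$. Concretely, the extension operator in \cite{ihnatsyeva} is built exactly in this Whitney-cube-with-polynomial-projection fashion: on a Whitney cube $Q$ one takes a polynomial which nearly best-approximates $\psi$ on a nearby piece $a(Q)\cap\partial\Omega$ of the $d$-set, glues by the partition of unity $\{\varphi_Q\}$, and the resulting function lies in $F^s_{pq}(\R^n)$ with norm controlled by $\lVert\psi\rVert_{B^{s-(n-d)/p}_{pp}(\partial\Omega)}$. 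The operator $\mathrm{Pr}_{k-1,a(Q)}$ defined in \eqref{projs} is precisely such an $L^1(a(Q)\cap\partial\Omega)$-bounded polynomial projector onto $\mathcal{P}_{k-1}$ (the Remez-type Theorem \ref{qremez} gives the needed comparability of norms of polynomials on $a(Q)$ versus on $a(Q)\cap\partial\Omega$, so that $\lvert\mathrm{Pr}_{k-1,a(Q)}\psi\rvert$ on $Q$ is controlled by the local approximation errors of $\psi$ on $\partial\Omega$ near $Q$). So $R(f)$ is, up to the irrelevant modification near large cubes ($\diam Q>\Delta$, handled by the cutoff in \eqref{projs} and the fact that only cubes with $\diam Q\lesssim 1$ matter for the $F^s_{pq}$-norm since $\partial\Omega$ is a $d$-set), an admissible extension of $\psi$; invoking the extension theorem gives $\lVert R(f)\rVert_{F^s_{pq}(\R^n)}\lesssim\lVert\psi\rVert_{B^{s-(n-d)/p}_{pp}(\partial\Omega)}\lesssim\lVert f\rVert_{F^s_{pq}(\R^n)}$.

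For the first summand, I would argue as follows. Set $h:=f - R(f)$. Since on $\R^n\setminus\Omega$ the function $R(f)$ is an extension of $\psi=\mathrm{Tr}_{\partial\Omega}f$ with the same boundary trace, we get $\mathrm{Tr}_{\partial\Omega}h = \psi - \psi = 0$ pointwise $\mathcal{H}^d$-a.e.\ on $\partial\Omega$ (using that the trace is independent of the extension, and that strict/Lebesgue points are shared). Now $h\in F^s_{pq}(\R^n)$ with $\lVert h\rVert_{F^s_{pq}(\R^n)}\lesssim\lVert f\rVert_{F^s_{pq}(\R^n)}$ by the previous paragraph, and $h$ has vanishing boundary trace, so Corollary \ref{ext_corollary} applies and yields $\lVert h\chi_\Omega\rVert_{F^s_{pq}(\R^n)}\lesssim\lVert h\rVert_{F^s_{pq}(\R^n)}\lesssim\lVert f\rVert_{F^s_{pq}(\R^n)}$. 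But $h\chi_\Omega = (f - R(f))\chi_\Omega = f\chi_\Omega$, since $R(f)$ is supported in $\overline{\R^n\setminus\Omega}$ and $\chi_\Omega$ kills it a.e. Combining, $\lVert\mathrm{Ext}_{k,\Omega}f\rVert_{F^s_{pq}(\R^n)}\le\lVert f\chi_\Omega\rVert_{F^s_{pq}(\R^n)}+\lVert R(f)\rVert_{F^s_{pq}(\R^n)}\lesssim\lVert f\rVert_{F^s_{pq}(\R^n)}$, and linearity of $\mathrm{Ext}_{k,\Omega}$ was already observed. I expect the main obstacle to be the middle step: verifying carefully that $R(f)$ coincides with (a permissible instance of) the extension operator of \cite{ihnatsyeva} applied to $\psi$, i.e.\ matching the polynomial projectors $\mathrm{Pr}_{k-1,a(Q)}$ and the choice of near-cubes $a(Q)$ with those in that construction, and checking that the cutoff at $\diam Q=\Delta$ together with the $d$-set property does not affect the $F^s_{pq}$-estimate; the Remez inequality Theorem \ref{qremez} and the bounded-overlap properties of $\{a(Q)\}$ are the technical engine here.
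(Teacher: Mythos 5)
Your overall architecture (trace theorem, extension theorem, vanishing-trace plus Corollary \ref{ext_corollary}) is the right toolkit, but the decomposition you chose breaks the argument at its central step. You split $\mathrm{Ext}_{k,\Omega}f=f\chi_\Omega+R(f)$ with $R(f)=\chi_{\R^n\setminus\Omega}\sum_Q\varphi_Q\,\mathrm{Pr}_{k-1,a(Q)}\psi$, and then claim that $R(f)$ is ``a permissible instance of'' the extension operator of \cite{ihnatsyeva} applied to $\psi$, so that $\lVert R(f)\rVert_{F^s_{pq}(\R^n)}\lesssim\lVert\psi\rVert_{B^{s-(n-d)/p}_{pp}(\partial\Omega)}$ and $\mathrm{Tr}_{\partial\Omega}R(f)=\psi$. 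This is not so: the Whitney-type extension of \cite{ihnatsyeva} is the \emph{two-sided} sum over all cubes of $\mathcal{W}_{\partial\Omega}$ (the operator the paper calls $\mathrm{Ext}_{k,\partial\Omega}$ in \eqref{DefExtensionOperator}), whereas your $R(f)$ is that function truncated by $\chi_{\R^n\setminus\Omega}$. Truncation by $\chi_{\R^n\setminus\Omega}$ is exactly the kind of multiplier operation that fails on $F^s_{pq}(\R^n)$ in the regime $s>(n-d)/p$ unless the trace vanishes (this is the whole point of Theorem \ref{hardy_general} and Corollary \ref{ext_corollary}; cf.\ also \cite[Proposition 3.7]{caetano}). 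Concretely, if $f\equiv 1$ near a portion of $\partial\Omega$, then near that portion $R(f)$ is essentially the characteristic function of the exterior, which is not in $F^s_{pq}(\R^n)$ when $sp>n-d$; likewise $f\chi_\Omega$ is essentially $\chi_\Omega$ there and also fails to be in the space. So both pieces of your splitting are individually uncontrolled (their boundary jumps cancel only in the sum), the norm bound on $R(f)$ is false in general, the trace identity $\mathrm{Tr}_{\partial\Omega}R(f)=\psi$ is unjustified (the symmetric averages defining the trace see the zero values on the $\Omega$-side), and consequently $h=f-R(f)$ is neither known to lie in $F^s_{pq}(\R^n)$ nor to have zero trace, so Corollary \ref{ext_corollary} cannot be applied to it.

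The repair is precisely the paper's grouping: set $Ef:=\mathrm{Ext}_{k,\partial\Omega}f=\sum_{Q\in\mathcal{W}_{\partial\Omega}}\varphi_Q\,\mathrm{Pr}_{k-1,a(Q)}\mathrm{Tr}_{\partial\Omega}f$ \emph{without} the exterior cutoff; this is bounded on $F^s_{pq}(\R^n)$ by the restriction and extension theorems \cite[Theorems 4.8 and 6.7]{ihnatsyeva}, and $g:=f-Ef$ has $\mathrm{Tr}_{\partial\Omega}g=0$ $\mathcal{H}^d$-a.e.\ by \cite[Proposition 5.5]{ihnatsyeva}. Then one uses the identity $\mathrm{Ext}_{k,\Omega}f=g\chi_\Omega+Ef$ (a.e., since $\partial\Omega$ has Lebesgue measure zero), applies Corollary \ref{ext_corollary} to $g$, and adds the bound for $Ef$. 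In other words, you must multiply by $\chi_\Omega$ only after subtracting the full two-sided extension, rather than trying to bound $f\chi_\Omega$ and the exterior part separately.
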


\begin{proof}
Define
\begin{equation}\label{DefExtensionOperator}
{\rm Ext}_{k,\partial\Omega}f:=\sum_{Q\in \mathcal{W}_{\partial\Omega}}\varphi_Q\{\mathrm{Pr}_{k-1,a(Q)}\circ \mathrm{Tr}_{{\partial\Omega}}(f)\}.
\end{equation}
By  restriction and extension theorems for $d$-sets, 
\cite[Theorem 4.8]{ihnatsyeva} and  \cite[Theorem 6.7]{ihnatsyeva}, we see that
$\mathrm{Ext}_{k,\partial\Omega}$ is a bounded linear
operator on $F^s_{pq}(\R^n)$. Moreover, the function
$g:= f - \mathrm{Ext}_{k,\partial\Omega} f$
is such that $\mathrm{Tr}_{\partial\Omega} g=0$ pointwise
$\mathcal{H}^d$ almost everywhere in $\partial\Omega$, \cite[Proposition 5.5]{ihnatsyeva}. Therefore,
by Corollary \ref{ext_corollary} and the boundedness of $\mathrm{Ext}_{k,\partial\Omega}$,
\[
\lVert{\rm Ext}_{k,\Omega}f\rVert_{F^{s}_{pq}(\R^n)}
 = \lVert g\chi_\Omega + \mathrm{Ext}_{k,\partial\Omega} f\rVert_{F^s_{pq}(\R^n)}\lesssim \lVert f\rVert_{F^{s}_{pq}(\R^n)}\,.
\]
This is the desired norm estimate.
\end{proof}

\begin{remark}
The operator $\mathrm{Ext}_{1,\Omega}$ is
in considered \cite{H} for studying the extension problem
on spaces $W^{1,p}(\Omega)$. 
%
\end{remark}

The following is a corollary of Theorem \ref{main_extension}
and Remark \ref{clarify}. 

\begin{corollary}\label{cor_ext}
Let $\Omega$ be a domain 
whose closure $\overline{\Omega}$ is an $n$-set, and whose
boundary $\partial\Omega$ is a $d$-set with $n-1<d<n$. Suppose that
$1<p<\infty$, $1\le q<\infty$, $s>(n-d)/p$, and $k=[s]+1$.
Fix any bounded extension operator $\mathrm{N}^s_{pq}:F^s_{pq}(\Omega)\to F^{s}_{pq}(\R^n)$
(possibly non-linear).
Then, the formula
\[
\mathrm{E}^s_{pq}: = f\mapsto \mathrm{Ext}_{k,\Omega} (\mathrm{N}^s_{pq} f)
\]
defines a bounded linear extension operator $F^{s}_{pq}(\Omega)\to F^{s}_{pq}(\R^n)$, which
is independent of the chosen extension operator $\mathrm{N}^s_{pq}$.
\end{corollary}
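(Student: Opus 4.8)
The plan is to reduce the corollary directly to Theorem~\ref{main_extension} and Remark~\ref{clarify}, with essentially no new analytic work. Fix a bounded (possibly non-linear) extension operator $\mathrm{N}^s_{pq}:F^s_{pq}(\Omega)\to F^{s}_{pq}(\R^n)$. Given $f\in F^s_{pq}(\Omega)$, the function $\mathrm{N}^s_{pq}f$ lies in $F^s_{pq}(\R^n)$ and restricts to $f$ on $\Omega$, so by Theorem~\ref{main_extension} the function $\mathrm{Ext}_{k,\Omega}(\mathrm{N}^s_{pq}f)$ is in $F^s_{pq}(\R^n)$ with norm controlled by $\lVert\mathrm{N}^s_{pq}f\rVert_{F^s_{pq}(\R^n)}\lesssim\lVert f\rVert_{F^s_{pq}(\Omega)}$. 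It is also an extension of $f$, since on $\Omega$ the definition \eqref{DefExtensionOperator_I} of $\mathrm{Ext}_{k,\Omega}$ returns the input function unchanged, i.e. $(\mathrm{Ext}_{k,\Omega}(\mathrm{N}^s_{pq}f))|_\Omega = (\mathrm{N}^s_{pq}f)|_\Omega = f$. This already shows $\mathrm{E}^s_{pq}$ is a bounded extension operator; it remains to check linearity and independence of the choice of $\mathrm{N}^s_{pq}$.

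The key point — and the only place where something must be said — is that $\mathrm{Ext}_{k,\Omega}g$ depends on $g$ only through $g|_\Omega$. Indeed, on $\Omega$ the output equals $g$ itself, which obviously depends only on $g|_\Omega$; and on $\R^n\setminus\Omega$ the output is built solely from $\mathrm{Tr}_{\partial\Omega}g$ via the polynomial projections $\mathrm{Pr}_{k-1,a(Q)}$ and the partition of unity $\{\varphi_Q\}$. By Remark~\ref{clarify}, under the hypothesis that $\overline{\Omega}$ is an $n$-set, the trace $\mathrm{Tr}_{\partial\Omega}g$ in its Lebesgue points coincides $\mathcal{H}^d$-a.e.\ with the interior trace $x\mapsto\lim_{r\to0+}\fint_{Q(x,r)\cap\Omega}g(y)\,dy$, which is determined by $g|_\Omega$ alone. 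Since the projections $\mathrm{Pr}_{k-1,a(Q)}$ act on $L^1(a(Q)\cap\partial\Omega)$, changing $\mathrm{Tr}_{\partial\Omega}g$ on an $\mathcal{H}^d$-null set does not affect $\mathrm{Ext}_{k,\Omega}g$. Consequently, if $g_1,g_2\in F^s_{pq}(\R^n)$ satisfy $g_1|_\Omega=g_2|_\Omega$ then $\mathrm{Ext}_{k,\Omega}g_1=\mathrm{Ext}_{k,\Omega}g_2$.

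From this observation both remaining assertions follow at once. Independence of the chosen extension: if $\mathrm{M}^s_{pq}$ is any other bounded extension operator, then $(\mathrm{N}^s_{pq}f)|_\Omega=f=(\mathrm{M}^s_{pq}f)|_\Omega$, hence $\mathrm{Ext}_{k,\Omega}(\mathrm{N}^s_{pq}f)=\mathrm{Ext}_{k,\Omega}(\mathrm{M}^s_{pq}f)$, so $\mathrm{E}^s_{pq}$ does not see which extension was used. Linearity: given $f_1,f_2\in F^s_{pq}(\Omega)$ and scalars $\alpha,\beta$, the function $\alpha\,\mathrm{N}^s_{pq}f_1+\beta\,\mathrm{N}^s_{pq}f_2$ is \emph{an} extension of $\alpha f_1+\beta f_2$, and $\mathrm{Ext}_{k,\partial\Omega}$ (the off-$\Omega$ part of $\mathrm{Ext}_{k,\Omega}$, see \eqref{DefExtensionOperator}) is linear because $\mathrm{Tr}_{\partial\Omega}$, the $\mathrm{Pr}_{k-1,a(Q)}$, and the partition of unity are all linear; therefore
\[
\mathrm{E}^s_{pq}(\alpha f_1+\beta f_2)=\mathrm{Ext}_{k,\Omega}\big(\alpha\,\mathrm{N}^s_{pq}f_1+\beta\,\mathrm{N}^s_{pq}f_2\big)=\alpha\,\mathrm{E}^s_{pq}f_1+\beta\,\mathrm{E}^s_{pq}f_2,
\]
where the last equality uses that on $\Omega$ the operator is just restriction (additive) and off $\Omega$ it is the linear operator $\mathrm{Ext}_{k,\partial\Omega}$, combined with the independence statement to rewrite $\mathrm{E}^s_{pq}f_i=\mathrm{Ext}_{k,\Omega}(\mathrm{N}^s_{pq}f_i)$. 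The only mild subtlety — and thus the ``main obstacle'', though it is light — is being careful that the interior-trace identity of Remark~\ref{clarify} genuinely requires the $n$-set hypothesis on $\overline{\Omega}$ (so that averages over $Q(x,r)\cap\Omega$ are comparable to averages over $Q(x,r)$) and the fact that $\partial\Omega$ has zero Lebesgue measure; both are in force here.
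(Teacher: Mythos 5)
Your proposal is correct and follows exactly the route the paper intends: the paper gives no separate proof, merely noting that the corollary follows from Theorem~\ref{main_extension} (boundedness of $\mathrm{Ext}_{k,\Omega}$) together with Remark~\ref{clarify} (the trace on $\partial\Omega$ of any extension is determined $\mathcal{H}^d$-a.e.\ by $f|_\Omega$ via the interior trace, since $\overline{\Omega}$ is an $n$-set and the projections $\mathrm{Pr}_{k-1,a(Q)}$ only see $\mathcal{H}^d$-a.e.\ data). Your write-up simply spells out these details, including the deduction of linearity and independence of $\mathrm{N}^s_{pq}$ from the fact that $\mathrm{Ext}_{k,\Omega}g$ depends only on $g|_\Omega$, which matches the paper's intended argument.
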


\begin{remark}
Let us clarify that, by definitions, there
is always a bounded non-linear extension operator
$\mathrm{N}^s_{pq}:F^{s}_{pq}(\Omega)\to F^s_{pq}(\R^n)$
for which
$\mathrm{N}^s_{pq} f\lvert_\Omega = f$ for every $f\in F^s_{pq}(\Omega)$.
\end{remark}

\begin{remark}
We observe the following independence
on the microscopic parameter:
the induced extension
operator $\mathrm{E}^s_{pq}$, see
Corollary \ref{cor_ext},
has the property that, for every $1\le r\le \infty$,
\[
\lVert \mathrm{E}^s_{pq}f\lvert_{\R^n\setminus \overline{\Omega}} \rVert_{F^{s}_{pr}(R^n\setminus\overline{\Omega})}
\lesssim \lVert f\rVert_{F^{s}_{pq}(\Omega)}\,.
\]
This follows from restriction and extension theorems,
\cite[Theorem 4.8]{ihnatsyeva} and  \cite[Theorem 6.7]{ihnatsyeva}. 
In particular,
we use the fact that the trace space $B^{s-(n-d)/p}_{pp}(\partial\Omega)$ 
of $F^{s}_{pq}(\R^n)$ is independent of the microscopic parameter
$q$.
\end{remark}

\bigskip

\small{

\noindent L.I.: Department of Mathematics and Statistics,
P.O. Box 68, FI-00014 University of Helsinki, Finland. 
e-mail: {\tt lizaveta.ihnatsyeva@aalto.fi}

\medskip
\noindent A.V.V.: Department of Mathematics and Statistics,
P.O. Box 68, FI-00014 University of Helsinki, Finland. 
e-mail: {\tt antti.vahakangas@helsinki.fi}}


\begin{thebibliography}{BBM2}
 \bibitem{AH} D.R. Adams and L.I. Hedberg, Function spaces and potential theory, 
Grundlehren Math. Wiss.  314,
Springer-Verlag, Berlin, 1996.

\bibitem{BesovIlinNikolski} O.V. Besov, V.P. Il'in and S.M. Nikol'skii, Integral representations of functions and embedding theorems, 
New York: J. Wiley and Sons, vol. 1, 1978; vol. 2, 1979.


\bibitem{BMMM}
Kevin Brewster, Dorina Mitrea, Irina Mitrea, and Marius Mitrea,
\textit{Extending Sobolev Functions with Partially Vanishing Traces
from Locally $(\epsilon,\delta)$-Domains and Applications to Mixed
Boundary Problems},
arXiv: 1208.4177 (2012).

\bibitem{Brudnyi74} Yu. Brudnyi, Spaces defined by means of local approximations, Trudy Moskov. Math. Obshch.  24 (1971) 69--132; English transl.: Trans. Moskow Math Soc.  24 (1974) 73--139.

\bibitem{BrudnyiBrudnyi} 
A. Brudnyi and Yu. Brudnyi, 
\textit{Remez type inequalities and Morrey--Campanato spaces on Ahlfors regular sets},
Contemp. Math., \textbf{445} (2007), 19--44.

\bibitem{caetano}
{Ant\'onio M. Caetano},
\textit{Approximation by functions of compact support in Besov--Triebel--Lizorkin spaces on irregular domains},
{Studia Math.}, \textbf{142} (2000), no. 1,  {47--63}.

\bibitem{calderon}
{A.-P. Calder\'on},
\textit{Lebesgue spaces of differentiable functions and distributions},
In Proc. Sympos. Pure Math., Vol. IV, pages 33--49, American Mathematical
Society, Providence, R.I., 1961.

\bibitem{ds}
Ronald A. DeVore and Robert C. Sharpley,
\textit{{B}esov spaces on domains in $\mathbb{R}^d$},
{Trans. Amer. Math. Soc.}, \textbf{335}  (1993),
{843--864}.

\bibitem{DeVoreSharpley}
R.A. DeVore and R.C. Sharpley, Maximal functions measuring smoothness,
Mem. Amer. Math. Soc. \textbf{47} (1984), 1--115.

\bibitem{Dyda}
B. Dyda, \textit{ A fractional order Hardy inequality}, Illinois J. Math. \textbf{48} (2004), no. 2, 575--588.

\bibitem{E-HSV}
{David E. Edmunds}, {Ritva Hurri-Syrj\"anen} and {Antti V. V{\"a}h{\"a}kangas},
\textit{Fractional Hardy-type inequalities in domains with uniformly fat complement},
{Proc. Amer. Math. Soc.}, accepted for publication.

\bibitem{FaracoRogers}
Daniel Faraco and Keith M. Rogers,
\textit{The Sobolev norm of characteristic functions with applications to the Calder\'on inverse problem},
Quart. J. Math., doi: 10.1093/qmath/har039 (2012).

\bibitem{H}
{Petteri Harjulehto},
\textit{Traces and Sobolev extension domains},
{Proc. Amer. Math. Soc.}, \textbf{134}  (2006), no. 8, {2373--2382}.


\bibitem{H-SV}
{Ritva Hurri-Syrj\"anen} and {Antti V. V{\"a}h{\"a}kangas},
\textit{On fractional Poincar\'e inequalities}, J. Anal. Math., accepted for publication.

\bibitem{ihnatsyeva}
Lizaveta Ihnatsyeva and Antti V. V{\"a}h{\"a}kangas,
\textit{Characterization of traces of smooth functions on Ahlfors regular sets}, arXiv:1109.2248 (2011).

\bibitem{JonssonWallin1984} 
Alf Jonsson and Hans Wallin,
Function spaces on subsets of
$\mathbb{R}^n$,  Mathematical reports, Vol. 2,  Part 1, London: Harwood Academic Publishers,
1984.


\bibitem{JonssonII} A. Jonsson, 
\textit{Atomic decomposition of Besov spaces
on closed sets}, in:  Function spaces, differential operators
and non-linear analysis, Teubner-Texte Math., vol. 133, Leipzig: Teubner, 1993,
pp. 285--289.

\bibitem{KinnunenMartio} J. Kinnunen and O. Martio, \textit{Hardy's inequalities for Sobolev functions}, Math. Res. Lett. \textbf{4} (1997), no. 4, 489--500.
\bibitem{lehrback}
Juha Lehrb\"ack,
\textit{Weighted Hardy inequalities and the size of the boundary},
Manuscripta Math., \textbf{127} (2008), no. 2, 249--273.


\bibitem{Leindler} L. Leindler,
\textit{Generalization of inequalities of Hardy and Littlewood}, 
Acta Sci. Math. (Szeged), \textbf{ 31} (1970), 279--285. 

\bibitem{Lewis1988}
John L. Lewis, \textit{Uniformly fat sets}, 
Trans. Amer. Math. Soc., \textbf{308} (1988), 177--196.



\bibitem{Mat95}
P.\ Mattila, Geometry of sets and measures in Euclidean spaces: 
fractals and rectifiability, Cambridge Studies in Advanced Mathematics ~44, Cambridge University Press, Cambridge, 1995.

\bibitem{Shvartsman}
Pavel Shvartsman, \textit{Local approximations and intrinsic characterization
of spaces of smooth functions on regular subsets of
$\mathbb{R}^n$}, Math. Nachr.,  \textbf{279}  (2006),
1212--1241.

\bibitem{sickel}
Winfried Sickel,
\textit{On pointwise multipliers
for $F^{s}_{pq}(\R^n)$ in case $\sigma_{p,q}<s<n/p$},
Annali Mat. pura applicata, \textbf{176} (1999),  209--250.








\bibitem{S}
Elias M. Stein,
\textit{Singular integrals and differentiability properties of functions},
Princeton Univ. Press, Princeton, New Jersey, 1970.

\bibitem{T89} Hans Triebel, \textit{Local approximation spaces},
Z. Anal. Anwendungen, \textbf{8} (1989), 261--288.

\bibitem{T99}
Hans Triebel,
\textit{Hardy inequalities in function spaces},
Math. Bohem., \textbf{124} (1999), 123--130.

\bibitem{T03}
Hans Triebel,
\textit{Non-smooth atoms and pointwise multipliers in function spaces},
Ann. Mat. Pura Appl., \textbf{182} (2003), 457--486.

\bibitem{T2}
Hans Triebel, Theory of Function Spaces II, Basel, Birkh\"auser, 1992.

\bibitem{T3} Hans Triebel, The Structure of Functions, Basel, Birkh\"auser, 2001.

\bibitem{T4} Hans Triebel, Function Spaces and Wavelets on Domains, European Mathematical Society, 2008.



\bibitem{Wannebo1990}
Andreas Wannebo, \textit{Hardy inequalities}, 
Proc. Amer. Math. Soc., \textbf{109} (1990), 85--95.

\end{thebibliography}
\end{document}